\def\R{\mathbb{R}}
\newcommand{\der}[3][]{\frac{\partial^{#1} #2}{\partial #3^{#1}}}
\newcommand{\diff}[3][]{\frac{d^{#1} #2}{d #3^{#1}}}
\def\bb{\begin{equation}}
\def\ee{\end{equation}}
\def\bse{\begin{subequations}}
\def\ese{\end{subequations}}
\def\pa{\partial}
\newcommand{\la}{\langle}
\newcommand{\ra}{\rangle}
\def\rar{\rightarrow}
\def\ep{\varepsilon}
\def\ls{\lesssim}
\newcommand{\G}[3]{\Gamma_{#1}{}^{#2}{}_{#3}}
\def\bl{{\underline{L}}}
\def\lb{{\underline{L}}}
\def\bL{\bl}
\def\Bl{\bl}
\def\lm{\lambda}
\def\al{\alpha}
\def\be{\beta}
\def\ga{\gamma}
\def\de{\delta}
\def\Ga{\Gamma}
\newtheorem{defin}{Definition}[section]
\newtheorem{lemma}[defin]{Lemma}
\newtheorem{prop}[defin]{Proposition}
\newtheorem{theorem}[defin]{Theorem}
\newtheorem{claim}[defin]{Claim}
\newtheorem*{rema}{Remark}
\numberwithin{equation}{subsection}
\def\paxmt{ (\partial_{x_1}-\partial_{t}) }
\def\patpx{ (\partial_t+\partial_{x_1}) }
\def\fc{\footnotemark[1]}
\def\fl{\footnotemark[2]}
\def\fa{\footnotemark[3]}
\def\rl{R^{\text{lin}}}
\def\rq{R^{\text{quad}}}
\def\rlo{R^{\text{lin}(1)}}
\def\Gt{\tilde{\Gamma}}
\def\rco{\text{Ric}^{(1)}}
\def\Box{\square}\def\Boxr{\widetilde{\square}}\def\pa{\partial}
\def\a{\alpha}\def\b{\beta}
\def\sls#1{\text{$#1\mkern -13.0mu$\slash\,}}
\def\gv{\sqrt{|g|}}
\title[Counterexample to local existence for Einstein's eq.   ]{ A sharp counterexample to local existence of low regularity solutions to Einstein's equations in wave coordinates.}
\author{Boris Ettinger}
\email{boris.ettinger@gmail.com}
\author{Hans Lindblad}
\address{Department of Mathematics, Johns Hopkins University, 404 Krieger Hall, 3400 N. Charles
Street, Baltimore, Maryland 21218}
\email{lindblad@math.jhu.edu}
\begin{document}
\maketitle

%\today, \currenttime
\begin{abstract} We give a sharp counter example to local existence of low regularity solutions to Einstein's equations in wave coordinates. We show that there are initial data in $H^2$ satisfying the wave coordinate condition such that there is no solution in $H^2$ to Einstein's equations in wave coordinates for any positive time.
This result is sharp since Klainerman-Rodnianski and Smith-Tataru proved existence for the same equations with slightly more regular initial data.
\end{abstract}

\section{Introduction}
The Einstein vacuum equations $R_{\mu\nu}=0$ in wave coordinates
becomes a system on nonlinear wave equations, called the
reduced Einstein equations
\begin{equation}\label{eq:reducedEinstein}
\widetilde{\Box}_g g_{\mu\nu} =F_{\mu\nu}(g)[\partial g,\partial g].
\end{equation}
The metric in addition is assumed to satisfy the wave coordinate condition
\begin{equation}\label{eq:WaveCordinateCond}
\partial_\alpha \big(\sqrt{|g|} g^{\alpha\beta}\big)=0,\qquad \text{where} \quad  |g|=|\det{\big(\partial g/\partial x\big)}|,
\end{equation}
which is preserved by the reduced equations if its satisfied initially and if data satisfies the so called constraint equations.
Here $F_{\mu\nu}(g)[\partial g,\partial g]$ are quadratic forms in $\partial g$ with coefficients depending on $g$ and the reduced wave operator is given by
\begin{equation}
\widetilde{\Box}_g=g^{\alpha\beta}\partial_\alpha\partial_\beta.
\end{equation}

We are considering the initial value problem with low regularity data. Given initial data in Sobolev spaces $H^s$;
\begin{equation}
g\big|_{t=0}=g^0\in H^s,\qquad \partial_t g\big|_{t=0} =g^1\in H^{s-1}
\end{equation}
we are asking for which $s$ we can obtain a local solution in $H^s$, i.e.
\begin{equation}
g(t,\cdot)\in H^s,\qquad \partial_t g(t,\cdot)\in H^{s-1}, \quad 0\leq t\leq T,
\end{equation}
for some $T>0$,
given that initial data satisfy the constraint equations and the wave coordinate condition. In 1952 Choquet-Bruhat proved that this is true for large $s$. More recently
Klainerman-Rodinianski \cite{KR} respectively Smith-Tataru \cite{ST} proved local existence in $H^s$, for $s>2$ for Einstein's equations in wave coordinates.
The result in \cite{ST} is in fact for more general quasilinear equations of the above form (see also a recent work of Wang \cite{W}). Moreover, Klainerman-Rodnianski-Szeftel \cite{KRS} recently proved that one has local existence of bounded curvature solutions to Einstein's equations if the curvature is bounded initially. However, that does not imply existence in wave coordinates.

We in fact show that one do not in general have local existence
in $H^2$ for Einstein's equations in wave coordinates:
\begin{theorem} For any $\varepsilon>0$ there is domain of dependence $D$ and a smooth solution to Einstein's equations in wave coordinates in $D$ such that
\begin{equation}
\| g(0,\cdot)-m\|_{H^2(D_0)}+\|\partial_t g(0,\cdot)\|_{H^1(D_0)}\leq \varepsilon,
\end{equation}
where $m$ is the Minkowski metric,
but for any $t>0$
\begin{equation}
\| g(t,\cdot)\|_{H^2(D_t)}+\|\partial_t g(t,\cdot)\|_{H^1(D_t)}=\infty,
\end{equation}
where $D_t=\{x;\, (t,x)\in D\}$. Moreover the curvature tensor satisfies
\begin{equation}
\| R(t,\cdot)\|_{L^2(D_t)}\leq C\varepsilon,
\end{equation}
for any $t$. (Here domain of dependence is an open subset of the upper half space such that the backward light cone from any point in it is also contained in it.)
 \end{theorem}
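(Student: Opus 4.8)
The plan is to construct an explicit self-similar or asymptotically self-similar solution whose curvature concentrates along an incoming null cone, so that the metric just barely fails to be $H^2$ while the curvature stays bounded in $L^2$. The key tension to exploit is that $H^2$ control of $g$ corresponds, modulo lower order terms, to $L^2$ control of $\partial^2 g$, whereas the curvature $R$ involves the combination $\partial^2 g + (\partial g)^2$; the nonlinear Einstein structure permits a cancellation in which the genuinely singular second derivatives of $g$ are pure gauge (coordinate) artifacts invisible to the geometric curvature.

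First I would set up coordinates adapted to an incoming null hypersurface, writing the metric as a perturbation of Minkowski $m$ concentrated near a light cone, and parametrize by a profile that is smooth away from the cone but has a controlled logarithmic or borderline singularity on it. The natural ansatz is to take $g - m$ to depend on a null coordinate $q$ (built from $t \pm x_1$, using the macros $\patmx$, $\patpx$) together with transverse variables, choosing the $q$-profile so that $\partial_q^2(g-m)$ just fails $L^2$ integrability across the cone — for instance a profile behaving like $q(\log q)^{-1}$ type functions whose second derivative is at the Sobolev borderline. Next I would impose the wave coordinate condition \eqref{eq:WaveCordinateCond} and the reduced Einstein equations \eqref{eq:reducedEinstein}: rather than solving them forward from generic $H^2$ data, I would verify that the explicit profile satisfies them exactly (it is \emph{smooth} as a solution in $D$), so existence of the solution is free and the only content is the regularity accounting. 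The wave coordinate gauge is essential here because it is precisely the gauge in which the coordinate components $g_{\mu\nu}$ can be worse-behaved than the geometry.

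The heart of the argument is the regularity bookkeeping on the time slices $D_t$. I would show that at $t=0$ the singularity has not yet formed — the data are arranged so that $g(0,\cdot) - m \in H^2$ and $\partial_t g(0,\cdot) \in H^1$ with norms $\le \varepsilon$ (the smallness achieved by taking the amplitude or the opening region small) — while for every $t>0$ the incoming cone has focused the singular profile onto the slice so that $\|g(t,\cdot)\|_{H^2(D_t)} = \infty$. The decisive computation is to check that the curvature components, after the Einstein-structure cancellations and after using the wave coordinate condition to replace the dangerous $\partial^2 g$ terms by quadratic $(\partial g)^2$ terms (which sit one derivative lower and hence remain in $L^2$), satisfy $\|R(t,\cdot)\|_{L^2(D_t)} \le C\varepsilon$ uniformly.

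The main obstacle I anticipate is \emph{sharpness}: making the profile land exactly at the $H^2$ threshold rather than in some weaker or stronger space, so that the example genuinely complements the $s>2$ existence results of \cite{KR} and \cite{ST}. This requires choosing the borderline $q$-profile with great care — a function whose second derivative fails $L^2$ by only a logarithmic margin — and then tracking every term in $F_{\mu\nu}(g)[\partial g,\partial g]$ and in the curvature to confirm that no hidden term either destroys the $L^2$ curvature bound or, conversely, accidentally restores $H^2$ regularity of the metric. A secondary difficulty is the geometry of the domain of dependence $D$: I must confirm that $D$ is genuinely a domain of dependence (backward light cones stay inside), so that the solution is a bona fide local solution on $D$ and the blow-up cannot be dismissed as a coordinate or domain artifact.
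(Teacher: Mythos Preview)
Your outline has the right flavor---an explicit plane-wave--type metric, borderline logarithmic profile, verify Einstein plus wave coordinates exactly---but it misses the mechanism that actually makes the example work, and with the ansatz you describe the construction would fail.

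The gap is this. You propose to take $g-m$ depending on a single null coordinate $q=x_1-t$ with a profile whose second $q$-derivative just barely fails $L^2$, and then argue that ``at $t=0$ the singularity has not yet formed'' while for $t>0$ focusing makes it appear. But a traveling profile $\chi(x_1-t)$ has \emph{identical} regularity on every slice $D_t$: the $H^2(D_t)$ norm of $\chi(x_1-t)$ is essentially $t$-independent (the singular point slides along the null ray $x_1=t$, $x_2=x_3=0$, which sits on $\partial D_t$ for every $t$). So with a one-profile ansatz you cannot have $H^2$ data at $t=0$ and non-$H^2$ solution for $t>0$; there is no ``focusing'' available here.

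What the paper exploits instead is the \emph{semilinear coupling} hidden in the weak-null structure of the reduced Einstein system. In the null frame one component (schematically $g_{22}-g_{33}$) is set to a profile $\chi_1(x_1-t)$ with $\chi_1'(s)=\varepsilon|\log|s/4||^\alpha$, $\tfrac14<\alpha<\tfrac12$, chosen so that $\chi_1''\in L^2(D_t)$ barely. The equation for $g_{\bl\bl}$ then contains the source $P(\partial_{\bl}g,\partial_{\bl}g)\sim -(\partial_{\bl}\chi_1)^2$, and solving $\Box\phi_2=-(\partial_{\bl}\phi_1)^2$ with zero data forces $g_{\bl\bl}=-t\,\tilde\chi_2(x_1-t)$ with $\tilde\chi_2'\sim(\chi_1')^2$. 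Squaring the logarithm pushes $\tilde\chi_2''$ \emph{out} of $L^2(D_t)$, and the explicit factor $t$ (coming from the Duhamel integration, not from any geometric focusing) makes this component vanish at $t=0$ and blow up the $H^2$ norm for every $t>0$. This two-component mechanism---one profile borderline in $H^2$, the nonlinearity producing a second profile just outside $H^2$ multiplied by $t$---is the entire point, and it is absent from your plan.

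Your account of the curvature bound is also off. It is not that the wave-coordinate condition trades $\partial^2 g$ for $(\partial g)^2$ in $R$. Rather, for this specific metric the only nonvanishing Riemann components are $R_{A\bl B\bl}$, and a direct computation shows these involve $\chi_1''$ and products of first derivatives, never $\tilde\chi_2''$; since $\chi_1''\in L^2(D_t)$ with norm $\lesssim\varepsilon$, the bound $\|R(t,\cdot)\|_{L^2(D_t)}\le C\varepsilon$ follows. The bad component $g_{\bl\bl}$ simply does not enter the surviving curvature components, and making $\mathrm{Ric}=0$ exactly requires an ODE relating $\tilde\chi_2$ to $\chi_1$. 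Finally, the domain-of-dependence verification is not automatic: because the metric is only a small $L^\infty$ perturbation of Minkowski one must open the cone slightly away from the characteristic $x_1=t$ and check by hand that the conormal is non-timelike for $g$.
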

\begin{rema} By a recent result Czimek \cite{C} data as above can be extended to data on
$\bold{R}^3$ in $H^2$ satisfying the constraint equations and the wave coordinate condition.
\end{rema}

To put the result in the theorem in context we recall that in Lindblad \cite{L1,L2} counterexamples to local existence in $H^2$ where given for the semi-linear equation
\begin{equation}\label{eq:semilinearcounterexample}
\Box \phi=(\bl \phi)^2
\end{equation}
respectively for the quasi-linear equation
\begin{equation}\label{eq:quasilinearcounterexample}
\Box \phi=\phi\,  \bl^2 \phi
\end{equation}
where $\bl=\pa_t-\pa_{x_1}$. The counterexample for the semi-linear equation is much stronger and the quasi-linear counterexample is just due to concentration of characteristics. On the other hand it was shown in Klainerman-Machedon \cite{KM} that there is local existence $H^s$, for any $s>3/2$, for systems that satisfy the null condition, in particular for
\begin{equation}
\Box \phi =(\pa_t\phi)^2-|\nabla_x\phi|^2.
\end{equation}
Einstein's equations in wave coordinates do not satisfy the null condition.
However as was shown in Lindblad-Rodnianski \cite{LR} it satisfy a weak null condition in a null frame and the semi-linear terms can be modelled by the system
\begin{equation}\label{eq:modelcounterexample}
\Box \phi_2=-(\bl \phi_1)^2,\qquad \Box \phi_1=0
\end{equation}
that satisfy the weak null condition.
The same argument used to give a counterexample for the systems
\eqref{eq:semilinearcounterexample} and \eqref{eq:quasilinearcounterexample}
in $H^2$ also gives a counterexample in $H^2$ for the model problem
\eqref{eq:modelcounterexample}:
\begin{prop} For any $\varepsilon>0$ there is a smooth solution $\phi=(\phi_1,\phi_2)$ to \eqref{eq:modelcounterexample} in $D=\{(t,x);\, (x_1-1)^2+x_2^2+x_3^2<(1-t)^2\}$ such that
\begin{equation}\label{eq:smalldata}
\| \phi(0,\cdot)\|_{H^2(D_0)}+\|\partial_t \phi(0,\cdot)\|_{H^1(D_0)}\leq \varepsilon
\end{equation}
but for any $t>0$
\begin{equation}\label{eq:largesolution}
\| \phi(t,\cdot)\|_{H^2(D_t)}+\|\partial_t \phi(t,\cdot)\|_{H^1(D_t)}=\infty,
\end{equation}
where $D_t=\{x;\, (t,x)\in D\}$. Moreover the data can be extended so that
\begin{equation}
\| \phi(0,\cdot)\|_{H^2(\bold{R}^3)}+\|\partial_t \phi(0,\cdot)\|_{H^1(\bold{R}^3)}\lesssim \varepsilon.
\end{equation}
\end{prop}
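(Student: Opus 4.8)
The plan is to reduce the entire construction to a one–dimensional plane–wave ansatz and to solve for $\phi_2$ explicitly. Introduce the null coordinates $q=x_1-t$ and $s=t+x_1$, so that $\bl=\partial_t-\partial_{x_1}=-2\partial_q$ and $\Box=-4\partial_s\partial_q-\partial_{x_2}^2-\partial_{x_3}^2$. First I would take $\phi_1=\psi(q)$ for a profile $\psi$ to be chosen; since it depends on $q$ alone it automatically solves $\Box\phi_1=0$, and $\bl\phi_1=-2\psi'(q)$, so the source becomes the $q$-dependent function $-(\bl\phi_1)^2=-4(\psi'(q))^2$. The reason for working in the cone $D$ with vertex $(1,1,0,0)$ is that its null generator $\{x_1=t,\ x_2=x_3=0\}$ lies in the hyperplane $\{q=0\}$, while $q>0$ holds throughout the open set $D$; hence any singularity of $\psi$ placed at $q=0$ produces a solution that is smooth inside $D$ but whose restrictions to the slices $D_t$ feel the singularity as $D_t$ pinches onto the generator.

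Next I would solve the inhomogeneous equation explicitly. Looking for $\phi_2=sH(q)$, one computes $\Box\phi_2=-4\partial_s\partial_q(sH)=-4H'(q)$, so choosing $H'=(\psi')^2$ gives exactly $\Box\phi_2=-4(\psi')^2=-(\bl\phi_1)^2$. The key structural observation is the shape of the second derivatives: with $\partial_{x_1}=\partial_s+\partial_q$ and $\partial_t=\partial_s-\partial_q$ one finds $\partial_{x_1}^2\phi_2=2H'+sH''$, $\partial_t^2\phi_2=-2H'+sH''$ and $\partial_t\partial_{x_1}\phi_2=-sH''$, while the $x_2,x_3$ derivatives vanish. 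Thus every second derivative carries the dangerous term $sH''=2s\psi'\psi''$, whose size is governed by $s=t+x_1$. This is exactly what separates $t=0$ from $t>0$: along the generator $s=2t+q$, so $s$ vanishes as $q\to0$ when $t=0$ but stays bounded below by $\sim 2t$ when $t>0$.

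I would then carry out the borderline Sobolev bookkeeping. A computation of the cross-section of $D_t$ at fixed $q$ shows its area is $\pi q\big(2(1-t)-q\big)\sim q$ as $q\to0$, so that $\int_{D_t}f(q)\,dx\sim\int_0 f(q)\,q\,dq$ near the generator. Choosing $\psi''(q)\sim q^{-1}(\log\tfrac1q)^{-\be}$ near $q=0$ gives $\psi'(q)\sim(\log\tfrac1q)^{1-\be}$ and $\int_0|\psi''|^2q\,dq\sim\int^\infty u^{-2\be}\,du$, finite iff $\be>1/2$; this keeps $\phi_1\in H^2$ for every $t$. For $\phi_2$ the worst term is $sH''\sim s\,q^{-1}(\log\tfrac1q)^{1-2\be}$, and for $t>0$ (where $s\sim2t>0$) one gets $\int_{D_t}|sH''|^2dx\sim t^2\int_0 q^{-1}(\log\tfrac1q)^{2-4\be}\,dq\sim t^2\int^\infty u^{2-4\be}\,du$, which diverges once $\be\le 3/4$. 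Hence any $\be\in(1/2,3/4]$ forces $\phi_1\in H^2$ but $\|\phi_2(t,\cdot)\|_{H^2(D_t)}=\infty$ for every $t>0$; at $t=0$ the factor $s=q$ replaces the diverging integral by $\int_0(\log\tfrac1q)^{4-4\be}q\,dq<\infty$, so the data is regular.

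Finally, smallness and the extension are cheap. Since the equation for $\phi_2$ is quadratic, rescaling $\psi\mapsto\de\psi$ sends $(\phi_1,\phi_2)\mapsto(\de\phi_1,\de^2\phi_2)$, so $\|\phi_1(0,\cdot)\|_{H^2(D_0)}+\|\phi_2(0,\cdot)\|_{H^2(D_0)}\ls\de$ can be made $\le\ep$ while the blow-up, being the value $+\infty$, is untouched; the Cauchy data, being finite in $H^2\times H^1(D_0)$, extends to $\R^3$ with comparable norm by a standard extension from the Lipschitz domain $D_0$, and a smooth cutoff away from the generator produces compactly supported data. I expect the only genuine obstacle to be the sharp logarithmic bookkeeping of the third paragraph: one must track simultaneously the vanishing cross-sectional weight $q$, the squaring of $\psi'$ in the source, and the vanishing versus non-vanishing of $s$ at $t=0$ and $t>0$, since all three conspire to place the thresholds for $\phi_1$ and $\phi_2$ on opposite sides of the admissible window $(1/2,3/4]$.
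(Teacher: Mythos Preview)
Your argument is essentially the paper's: the same plane-wave ansatz with a borderline logarithmic profile (your exponent $\beta$ is the paper's $1-\alpha$, so the window $(1/2,3/4]$ matches the paper's $1/4<\alpha<1/2$), the same cross-sectional weight $\sim q$ in the $H^2$ integrals, and the same extension/cutoff for the final claim. Two cosmetic points: with $\Box=-\partial_t^2+\Delta_x$ one gets $\Box=+4\partial_s\partial_q$ on functions of $(s,q)$, so the correct relation is $H'=-(\psi')^2$ (a harmless sign slip); and the paper takes $\phi_2=-t\,\chi_2(x_1-t)$ rather than your $sH(q)$, which differs from your particular solution by the free wave $\tfrac12 q\,\chi_2(q)$ and has the minor advantage that $\phi_2(0,\cdot)\equiv 0$, making the $t=0$ check immediate.
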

The proof of this is by finding explicit solutions of the system
depending on $(t,x_1)$ only inside the domain of dependence $D$, that satisfy the conditions. Its easy to check that
for any function $\chi_1$,
$$
\phi_1(t,x)=\chi_1(x_1-t),\qquad \phi_2(t,x)=-t\chi_2(x_1-t)
$$
solves the system if
$$
\chi_2(x_1)=2\int_0^{x_1}\chi_1^\prime(s)^2 \,ds.
$$
Let
$$
\chi_1(x_1)=\int_0^{x_1}{\epsilon|\log{|s/4|}|^\alpha \,ds},\quad
1/4<\alpha<1/2,
$$
in which case
$$
\chi_2(x_1) =2\int_0^{x_1}{\epsilon^2|\log{|s/4|}|^{2\alpha} \,ds}.
$$
We have
\begin{equation}
\|\phi_1(t,\cdot)\|_{H^2(D_t)}\sim \|\chi_1^{\prime\prime}\|_{L^2(D_t)},\qquad
\|\phi_2(t,\cdot)\|_{H^2(D_t)}\sim t\|\chi_2^{\prime\prime}\|_{L^2(D_t)},
\end{equation}
and a  calculation shows that
\begin{equation}
\int_{D_t} \chi_i^{\prime\prime}(t-x_1)^2\, dx
\sim \int_t^{2-t}{|\chi_i^{\prime\prime}(x_1-t)|^2 (x_1-t)\,dx_1}
\,\,\,\begin{cases} <\infty,\quad &\text{if } i=1,\\ =\infty,\quad &\text{if }i=2\end{cases},
\end{equation}
from which the first part of the proposition follows. The second part of the proposition is obtained by multiplying with a cutoff $\chi\big( (x_2^2+x_3^2)/x_1\big)$.

Note that in the example derivatives tangential to the characteristic surfaces $t-x_1=c$ are better behaved than transversal derivatives.

Modulo terms that satisfy the null condition or cubic terms that are smaller because of the smallness in the construction above we have
\begin{equation}\label{eq:ReducedEinstein}
\Boxr_g g_{\mu\nu} \sim P(\pa_\mu g, \pa_\nu g),\quad \text{where}\quad
 P(h,k)
=\frac 14 h^{\a}_\a k^\b_\b-
\frac 12 h^{\a\b} k_{\a\b}.
\end{equation}
Expressing this in a nullframe $L=\pa_t+\pa_{x_1}$, $\lb=\pa_t-\pa_{x_1}$, $A,B=\pa_{x_2},\pa_{x_3}$:
\begin{align}
\Boxr_g g_{TU}&\sim 0,\quad T\in \{L,A,B\}, \, \, U\in \{\lb,L,A,B\}, \label{eq:redwave1} \\
 \Boxr_g g_{\lb\lb}&\sim P(\pa_\lb g,\pa_\lb g)\label{eq:redwave2}.
\end{align}
The linearized version of the wave coordinate condition reads
\begin{equation}
-m^{\mu\nu}\pa_\mu g_{\nu\ga}+\frac{1}{2}m^{\mu\nu}\,\pa_\ga g_{\mu\nu}\sim 0,
\end{equation}
which expressed in a null frame becomes
\begin{equation}
-\frac{1}{2}\pa_\bL g_{L \ga} -\frac{1}{2} \pa_L g_{\bL \ga}+\pa_2 g_{2\ga}
+\pa_3 g_{3\ga}-\frac{1}{2} \pa_\ga \big(-g_{\bL L}+g_{22}+g_{33}\big)\sim 0
\end{equation}
Modulo tangential derivatives $\pa_L,\pa_2,\pa_3$ that we expect to be better
the wave coordinate condition reads
\begin{equation}
\pa_\bL g_{LL}\sim 0,\quad \pa_\bL g_{L2}\sim 0,\quad  \pa_\bL g_{L3}\sim 0,\quad \pa_\bL (g_{22}+g_{33})\sim 0
\end{equation}
which implies that
\begin{equation}
P(\pa_\bL g,\pa_\bL g)\sim -\frac{1}{2}
\big( (\pa_\bL g_{22})^2+(\pa_\bL g_{33})^2+2(\pa_\bL g_{23})^2\big)
\end{equation}
Consistent with this we choose
\begin{equation}
g_{22}=1+\chi_1(x_1-t),\quad g_{33}=1-\chi_1(x_1-t)
\end{equation}
and
\begin{equation}
g_{23}=g_{L2}=g_{L3}=0,
\end{equation}
These components solves the homogeneous wave equations \eqref{eq:redwave1}. In order to also solve the remaining wave equation \eqref{eq:redwave2} we must have
\begin{equation}
g_{\lb\lb}=-t\chi_2(x_1-t).
\end{equation}
In order to satisfy the remaining wave coordinate condition for $g_{\lb\lb}$ we
must have
\begin{equation}
\pa_L g_{\bl \bl}-2\de^{AB}\pa_A g_{B\bl}=0.
\end{equation}
To satisfy this we finally define
\begin{equation}
g_{B\bl}=-\frac14 x^B \chi_2(x_1-t),
\end{equation}
which also satisfy the wave equation \eqref{eq:redwave1}.

Based on the above linearized approximation we make the nonlinear ansatz in the table below
\begin{table}[ht]
\begin{tabular}{|l||c|c|c|c|}
\hline
%\diagbox{$Y=$}{$Z=$}
$g_{YZ}$

 & $L$ & $\bL$ & 2 & 3 \\[10pt]
\hline
\hline
$L$ &0 & -2 & 0 &0 \\[10pt]
\hline
$\bl$& -2 &  $-t\tilde{\chi}_2$ & $-\frac{1}{4}x_2(1+\chi_1)\tilde{\chi}_2$ & $-\frac{1}{4}(1+\chi_1)^{-1}x_3\tilde{\chi}_2$\\[10pt]
\hline
$2$ & 0 &$-\frac{1}{4}x_2(1+\chi_1)\tilde{\chi}_2$ & $1+\chi_1$ & 0\\[10pt]
\hline
3 & 0 &$-\frac{1}{4}x_3(1+\chi_1)^{-1}\tilde{\chi}_2$ & 0 & $(1+\chi_1)^{-1}$ \\[10pt]
\hline
\end{tabular}
\end{table}
with $\tilde{\chi}_2$ a modification of $\chi_2$:

This modification is obtained by trying to modify the metric above in order for it to satisfy the nonlinear wave coordinate condition. The reason this can be done is that
we first choose the metric so that $\det g=1$, in which the wave coordinate condition becomes a linear equation for the inverse of the metric
$$
\pa_\mu g^{\mu\nu}=\pa_L g^{L\nu}+\pa_{\bL} g^{\bL \nu}+\pa_1 g^{1\nu}+\pa_2 g^{2\nu}=0,
$$
solved in the same way we solved the linearized equation.

As it turns out with a metric in of the form in the table the only nonvanishing component of the curvature tensor is $R_{A\lb B\lb}\neq 0$ and  with $\tilde{\chi}_2$ satisfying
$$
\tilde{\chi}_2'-2(\chi'_1)^2(1+\chi_1)^{-2}-\tilde{\chi}_2^2/16=0,
$$
we have that the Ricci curvature $R_{\lb\lb}=g^{AB}R_{A\lb B\lb}=0$.

In the quasilinear case the domain has to be opened up slightly
away from the characteristic $t=x_1$, $x_2=x_3=0$,
to make sure the boundary of the domain is non-time
like and hence a domain of dependence. Since the metric is a small perturbation of the Minkowski metric in $L^\infty$ the light cones are close to those of Minkowski  and we only have to insure that the boundary is non time like.  Let $D$ be the domain
\begin{equation}
 D=\{(t,x);  (x_1-1)^2 H (x_1-1)+x_2^2/4+x_3^2/4< (1-t)^2\}
 \end{equation}
 where
 $H(x_1-1)=1$, when $x_1<1$ and $H(x_1-1)=1/4$, when $x_1>1$.
 The boundary consist of two parts $C=C_1\cup C_2$, where
\begin{equation}
 C_1=\{(t,x); \, x_1<1,\,\,\, (x_1-1)^2+x_2^2/4+x_3^2/4=(1-t)^2\},
\end{equation}
and
\begin{equation}
 C_2=\{(t,x); \, x_1\geq 1,\,\,\, (x_1-1)^2/4+x_2^2/4+x_3^2/4=(1-t)^2\}.
\end{equation}
$C_2$ is clearly non time like as is $C_1$ when $x_2^2+x_3^2\geq c>0$ since this
is true for the Minkowski metric with some room. In null coordinates
$u=(t-x_1)/2$, $v=(t+x_1)/2$, $C_1$ is given by
$$
4(1-v)u +x_2^2/4+x_3^2/4=0.
$$
The conormal is given by
$$
n=2(1-t)dt -2(1-x_1)dx_1 +x_2 dx_2/2+x_3 dx_3/2.
$$

Now its easy to see that the inverse of the metric takes the form

\begin{tabular}{|l||c|c|c|c|}
\hline
%\diagbox{$Y=$}{$Z=$}
%\diagbox{$Y=$}{$Z=$}
$g^{YZ}$ & $L$ & $\bL$ & 2 & 3 \\[10pt]
\hline
\hline
$L$ &$g^{LL}$ & $-\frac{1}{2}$ & $-\frac{1}{8}x_2\tilde{\chi}_2$ & $-\frac{1}{8}x_3\tilde{\chi}_2$ \\[10pt]
\hline
$\bl$& -$\frac{1}{2}$ &  0  & 0 &0\\[10pt]
\hline
$2$ & $-\frac{1}{8}x_2\tilde{\chi}_2$ &$0$  & $(1+\chi_1)^{-1}$ & 0\\[10pt]
\hline
3 & $-\frac{1}{8}x_3\tilde{\chi}_2$  &0 & 0 & $1+\chi_1$ \\[10pt]
\hline
\end{tabular}

It is easy to see that from this it follows that
$$
|g^{\alpha\beta} n_\alpha n_\beta -m^{\alpha\beta} n_\alpha n_\beta |
\lesssim (|\chi_1|+|\tilde{\chi}_2|)(x_2^2+x_3^2)+|g^{LL}| u^2,
$$
where $|u|\lesssim x_2^2+x_3^2$ on $C_1$,
and
$$
m^{\alpha\beta} n_\alpha n_\beta =-(x_2^2+x_3^2).
$$
Hence if $N$ is the normal to $C_1$ then
$$
g_{\alpha\beta}N^\alpha N^\beta\leq 0,
$$
so $C_1$ is non time like.

\section{The heuristic argument and Illposedness for the model system}
\label{sc:model}
 \subsection{The Reduced Einstein's Equations}
 Let $g$ be a solution of Einstein's equations
 \begin{equation}\label{Einstein}
 R_{\mu\nu}=0,
 \end{equation}
 in harmonic coordinates:
  \begin{equation}\label{eq:WaveCordinate}
\pa_\al (\sqrt{|g|}\, g^{\al\be})=0,\quad \be=0,\dots,3.
\end{equation}
  Denote the reduced wave operator by
 $$
 \Boxr =g^{\alpha\beta}\partial_\alpha\partial_\beta ,
 $$
 and let $h_{\a\b}\!=\!g_{\a\b}\!-\!m_{\a\b}$, and $m$ is the Minkowski metric. Then by \cite{LR} we have
\begin{equation}\label{eq:ReducedEinstein}
\Boxr_g h_{\mu\nu} =F_{\mu\nu} (h) (\pa h, \pa
h),
\end{equation}
where $F$ is a quadratic form in $\partial h$ with coefficients depending on $h$:
$$
F_{\mu\nu} (h) (\pa h, \pa h)=P(\partial_\mu h,\partial_\nu h)
+Q_{\mu\nu}(\pa h,\pa h) +G_{\mu\nu}(h)(\pa h,\pa h).
 $$
 Here
 $$
 P(h,k)
=\frac 14 h^{\a}_\a k^\b_\b-
\frac 12 h^{\a\b} k_{\a\b},
$$
where the indices are raised with respect to the Minkowski metric,
$Q_{\mu\nu}$ is a linear combinations of the standard
null-forms and $G_{\mu\nu}$  contains only cubic terms.
We want to construct a counter example to local existence in $H^2$. First by  \cite{KM} semilinear equations satisfying the classical nullcondition have local existence in $H^2$, so we can neglect these terms in a heuristic
argument. The counterexamples we construct below will be singular
along a light ray in such a way that $h$ vanishes exactly at the light cone
and therefore $|G_{\mu\nu}|\lesssim |h|\,|\partial h|^2$ will actually
be more regular than $|\partial h|^2$, so also this term can be neglected in
the heuristic argument. The counter example we construct will inside a light cone be a a function of $(t,x_1)$ only with a singularity along $t-x_1=0$,
but more regular in the $t+x_1$ direction and we therefore expect the derivatives in the $t-x_1$ direction to be worse than derivatives in the
other directions so expanding the metric in a null frame $L=\partial_t+\partial_1$, $\bL=\partial_t-\partial_1$, $A,B=\partial_2,\partial_3$, we see that $\partial_\mu$ is to leading order
$\frac{1}{2} L_\mu \pa_\bL$. We have $g^{\a\b}=m^{\a\b}-h^{\a\b}+O(h^2)$, where
$h^{\a\b}=m^{a\mu} m^{\b\nu} h_{\mu\nu}$ and  $h^{\alpha\beta}\partial_\alpha\partial_\beta$ is to leading order
$h_{LL}\partial_{\bL}^2$, where $h_{LL}=h_{\alpha\beta} L^\alpha L^\beta$.
Similarly $P(\partial_\mu h,\partial_\mu h) $ is to leading order given by
$ L_{\mu}L_{\nu} P(\pa_\bL h,\pa_\bL h)/4$. Hence expanding
$h$ in a null frame $h_{UV}=h_{\mu\nu} U^\mu U^\nu$, the reduced Einstein equations become to highest order
\begin{align}
\big(\Box -h_{LL}\pa_\bL^2) \,h_{TU}&\sim 0,\\
\big(\Box -h_{LL}\pa_\bL^2)\,  h_{\bL \bL}&\sim P(\pa_\bL h,\pa_\bL h),
\end{align}
where $T$ is any tangential frame component $T\in \{L,A,B\}$ and $U$ is any frame component $U\in\{\bL,L,A,B\}$. By \cite{LR}
\begin{multline}\label{eq:nullframeP}
P(p,k)
=\frac{1}{4}\delta^{AB}\big(2p_{A L}k_{B\underline{L}} +2p_{A
\underline{L}}k_{B{L}}- p_{AB}
k_{L\underline{L}}-p_{L\underline{L}}k_{AB} \big)\\
-\frac{1}{8}\big(p_{LL} k_{\underline{L}\underline{L}}
+p_{\underline{L}\underline{L}} k_{{L}{L}}\big)\\
-\frac{1}{4}\delta^{AB}\delta^{A^\prime
B^\prime}\big(2p_{AA^\prime}k_{BB^\prime}-
p_{AB} k_{A^\prime B^\prime}\big)
\end{multline}
The system simplifies further because as we shall see next the wave coordinate condition implies that
\begin{equation}
\pa_\bL h_{LL}\sim 0,\quad \pa_\bL h_{L2}\sim 0,\quad  \pa_\bL h_{L3}\sim 0,\quad \pa_\bL (h_{22}+h_{33})\sim 0
\end{equation}
which implies that
\begin{equation}
P(\pa_\bL h,\pa_\bL h)\sim -\frac{1}{2}\big( (\pa_\bL h_{22})^2+(\pa_\bL h_{33})^2+2(\pa_\bL h_{23})^2\big)
\end{equation}
and that after a possible change of variables we can also neglect the term
$h_{LL} \pa_\bL^2$.

\subsection{Illoposedness for the model problem}
Consider the following semilinear system:
\begin{equation} \Box \phi_2=-\big(\underline{L} \phi_1\big)^2,\qquad
\Box \phi_1=0,\qquad\text{where }
\underline{L}=\partial_t-\partial_{x^1}.
\end{equation}

Our first result using the techniques from \cite{L1} is illposedness for this system:

\begin{lemma}
\label{lm:chi1}
Let $\epsilon>0$ and set
$$
\chi_1(x_1)=\int_0^{x_1}{\epsilon|\log{|s/4|}|^\alpha \,ds},\quad 0<\alpha<1/2.
$$
There is $\Psi_1\in H^{2}(\Bbb R^3)$ such that
$$
\Psi_1(x)=\chi(x_1),\quad \text{in} \quad
B_0=\{x\in \Bbb R^3; (x_1-1)^2+x_2^2+x_3^2< 1\}
$$
and
$$
\|\Psi_1\|_{\dot H^{2}}\leq C_\alpha\epsilon,
$$
$\operatorname{supp}{\Psi_1}\subset\{x;|x|\leq 2\}$
and $\operatorname{singsupp}{\Psi_1}= \{0\}$.

Let
$$
\chi_2(x_1)=2\int_0^{x_1}\chi_1^\prime(s)^2 \,ds =2\int_0^{x_1}{\epsilon^2|\log{|s/4|}|^{2\alpha} \,ds},\quad 1/4<\alpha<1/2.
$$
There is $\Psi_{2t}\in \dot{H}^1(\Bbb R^3)$ such that
$$
\Psi_{2t}(x)=\chi_2(x_1-t)\quad\text{in}\quad B_t=\{x\in \Bbb R^3; (x_1-1)^2+x_2^2+x_3^2< (1-t)^2\}.
$$
For $0\leq t< 1$ we have
$$
\|\Psi_{2t}\|_{\dot{H}^2(B_t)}=\infty.
$$
\end{lemma}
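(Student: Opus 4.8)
The plan is to reduce each claim to the convergence (for $\Psi_1$) or divergence (for $\Psi_{2t}$) of a single one-variable integral, the geometry entering only through the cross-sectional area of the ball near the point where it is tangent to the singular plane. For the first part I would differentiate: for $0<s<4$ one has $\chi_1'(s)=\epsilon(\log(4/s))^\alpha$ and $\chi_1''(s)=-\epsilon\alpha(\log(4/s))^{\alpha-1}/s$, so $|\chi_1''(s)|^2\sim\epsilon^2(\log(4/s))^{2\alpha-2}/s^2$. Since $B_0$ is tangent to $\{x_1=0\}$ at the origin, its cross-section at height $x_1$ has area $\pi(2x_1-x_1^2)\sim2\pi x_1$, and as $\chi_1(x_1)$ depends on $x_1$ alone the only nonvanishing second derivative on $B_0$ is $\partial_{x_1}^2$. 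Hence
\[
\int_{B_0}|\chi_1''(x_1)|^2\,dx\sim\int_0^c\frac{(\log(4/x_1))^{2\alpha-2}}{x_1}\,dx_1=\int_{\log(4/c)}^\infty u^{2\alpha-2}\,du,
\]
after the substitution $u=\log(4/x_1)$, and this is finite exactly when $\alpha<1/2$; the $L^2$ and $\dot H^1$ parts are bounded by $C_\alpha\epsilon$ in the same manner, so $\chi_1(x_1)\big|_{B_0}\in H^2(B_0)$ with norm $\lesssim_\alpha\epsilon$.

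To produce $\Psi_1$ I would then apply a bounded Sobolev extension operator for the smooth bounded domain $B_0$, giving an $H^2(\mathbb{R}^3)$ function equal to $\chi_1(x_1)$ on $B_0$ with $\dot H^2$-norm $\lesssim_\alpha\epsilon$, and multiply by a cutoff equal to $1$ near $\overline{B_0}$ and supported in $\{|x|\le2\}$, the transition taking place away from the origin so as not to affect the estimate. For the singular support, the decisive observation is that $x_1>0$ holds uniformly on $\partial B_0\setminus\{0\}$, so there $\chi_1(x_1)$ is smooth up to the boundary; choosing (a Seeley/Stein-type) extension, which is built from local reflections and hence smooth across boundary points at which the data is smooth, then makes $\Psi_1$ smooth on $\mathbb{R}^3\setminus\{0\}$, while $\chi_1'(x_1)\to\infty$ as $x_1\to0^+$ forces the origin into $\operatorname{singsupp}\Psi_1$.

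For the second part I would run the same reduction but look for divergence. Here $\chi_2'(s)=2\epsilon^2(\log(4/s))^{2\alpha}$ and $\chi_2''(s)=-4\epsilon^2\alpha(\log(4/s))^{2\alpha-1}/s$, so $|\chi_2''(s)|^2\sim\epsilon^4(\log(4/s))^{4\alpha-2}/s^2$. The singularity of $\chi_2(x_1-t)$ lies at $x_1=t$, which is precisely the tip of $B_t$, where $B_t$ is tangent to $\{x_1=t\}$; writing $x_1=t+y$ the cross-sectional area is $\pi(2(1-t)y-y^2)\sim2\pi(1-t)y$. On $B_t$ the function $\Psi_{2t}=\chi_2(x_1-t)$ is smooth in the interior, so
\[
\|\partial_{x_1}^2\Psi_{2t}\|_{L^2(B_t)}^2\sim(1-t)\int_0^c\frac{(\log(4/y))^{4\alpha-2}}{y}\,dy=(1-t)\int_{\log(4/c)}^\infty u^{4\alpha-2}\,du,
\]
which diverges exactly when $\alpha>1/4$, giving $\|\Psi_{2t}\|_{\dot H^2(B_t)}=\infty$ for all $0\le t<1$. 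Global membership $\Psi_{2t}\in\dot H^1(\mathbb{R}^3)$ is no obstacle: on $B_t$, $|\nabla\Psi_{2t}|^2=\chi_2'(x_1-t)^2$ and $\int_{B_t}\chi_2'^2\,dx\sim(1-t)\int_0^c(\log(4/y))^{4\alpha}y\,dy<\infty$, so $\chi_2(x_1-t)\in H^1(B_t)$ and any bounded $H^1$-extension supplies the required function.

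The main obstacle is in the first part: producing one function that simultaneously equals $\chi_1(x_1)$ throughout $B_0$, has $\dot H^2$-norm $\lesssim_\alpha\epsilon$, and has singular support exactly $\{0\}$. The temptation is to localize by multiplying $\chi_1(x_1)$ by a cutoff vanishing on $\{x_1=0\}\setminus\{0\}$, but this fails: because $\partial B_0$ is tangent to the plane, such a cutoff must transition across a paraboloidal region where $x_1\sim x_2^2+x_3^2$ and its second derivatives are of size $x_1^{-2}$, so the term $\chi_1\,\partial^2(\text{cutoff})$ contributes $\int(\log(4/x_1))^{2\alpha}x_1^{-1}\,dx_1=\infty$. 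This is exactly why I would route the construction through an abstract extension operator rather than an explicit product cutoff; the delicate point is then to confirm that the operator propagates the smoothness of the data at every boundary point except the origin, so that $\operatorname{singsupp}\Psi_1$ is exactly $\{0\}$ while the operator's boundedness keeps the $\dot H^2$-norm under control.
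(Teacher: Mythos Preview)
Your proposal is correct and follows essentially the same route as the paper: reduce the $\dot H^2$ norm on $B_t$ to a one-variable integral via the cross-sectional area $\pi(2x_1-x_1^2)$, compute $\chi_i''$ explicitly to obtain integrands of the form $|\log(x_1/4)|^{\beta}/x_1$ whose convergence or divergence is decided by the sign of $\beta+1$, and then invoke Stein's extension theorem to pass from $H^2(B_0)$ to $H^2(\mathbb{R}^3)$. Your discussion of the singular-support claim and of why a direct product-cutoff fails is in fact more detailed than the paper's, which simply asserts that the Stein extension ``can be chosen to satisfy the above support and singular support properties.''
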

\begin{proof} We have
%$$
%S_0(x_1)=\{(x_2,x_3)\in \Bbb R^2; \,(x_1,x_2,x_3)\in B_0\},
%$$
%and
%$$
%a_0(x_1)=\int_{S_0(x_1)}{dx_2 \,dx_3}=\int_{r^2\leq 2x_1-x_1^2}{2\pi rdr}=
%\pi (2x_1-x_1^2)\leq 2\pi x_1
%$$
%so
%$$
%\int_{B_0}{|\chi^{\prime\prime}(x_1)|^2\,dx}=
%\int_0^2{|\chi^{\prime\prime}(x_1)|^2 a_0(x_1)\,dx_1}\leq
%\int_0^2{\frac{2 \epsilon^2\pi\alpha^2\,x_1 dx_1}{ x_1^2  |\log{|x_1/4|}|^{2(1-\alpha)}  } }
%<\infty .
%$$
\begin{multline*}
\int_{B_0}{|\chi_1^{\prime\prime}(x_1)|^2\,dx}=
\int_0^2{|\chi_1^{\prime\prime}(x_1)|^2
    \bigg(\int_{x_2^2+x_3^2\leq 2x_1-x_1^2}{dx_2 dx_3}\bigg)\,dx_1}\\
=\int_0^2{|\chi_1^{\prime\prime}(x_1)|^2 \pi (2x_1-x_1^2)\,dx_1}\leq
\int_0^2{\frac{2 \epsilon^2\pi\alpha^2\, x_1 dx_1}{  x_1^2  |\log{|x_1/4|}|^{2(1-\alpha)}  } }
<\infty .
\end{multline*}
Hence $\|\Psi_1\|_{\dot{H}^2(B_0)}\leq C_\alpha \epsilon$ and
it follows from extension theorems in
Stein\cite{S}, (see page 181)
that it can be extended to a function in $H^{2}(\Bbb R^3)$ with comparable
norm. Moreover, the extension can be chosen to satisfy the above support and
singular support properties.

Moreover, if $0\leq t<1$,
\begin{multline*}
\int_{B_t}{|\chi_2^{\prime\prime}(x_1-t)|^2\,dx}\\
=\int_t^{2-t}{|\chi_2^{\prime\prime}(x_1-t)|^2
    \bigg(\int_{x_2^2+x_3^2\leq (2-(x_1+t))(x_1-t)}{dx_2 dx_3}\bigg)\,dx_1} \\
=\int_t^{2-t}{|\chi_2^{\prime\prime}(x_1-t)|^2 \pi (2-(x_1+t))(x_1-t)\,dx_1}
\\
\geq
2\int_0^{1-t}{\frac{\epsilon^4(1-t)\pi(2\alpha)^2\, x_1 dx_1}{  x_1^2  |\log{|x_1/4|}|^{2(1-2\alpha)}  } }
=\infty .
\end{multline*}
\end{proof}

The data we will choose for (1.1) are
\begin{equation*}
\phi_1(0,x)=\Psi_1(x),\,\quad\quad\quad \partial_t \phi_1(0,x)=-\partial_{x_1}\Psi_{1}(x).
\end{equation*}
Note now, that by a domain of dependency argument the solution of (1.1)
inside the cone $\Lambda=\{(t,x);|x-(1,0,0)|\leq 1-t, t\geq 0\}$,
only depend on the data inside the ball $B_0$.
Since data inside the ball $B_0$ only depends on $x_1$, the solution $\phi_1$
inside $\Lambda$ satisfy
$$
\paxmt\patpx \phi_1(t,x_1)=0.
$$
It follows that $\patpx \phi_1=0$ in $\Lambda$ and hence
$$
\phi_1(t,x_1)=\chi_1(x_1-t),\qquad (t,x)\in \Lambda.
$$
Hence
$$
\paxmt\patpx \phi_2(t,x_1)=-\Big(\paxmt\phi_1(t,x_1)\Big)^2=
-4\chi_1^\prime(x_1-t)^2.
$$
We now choose data
\begin{equation*}
\phi_2(0,x)=0,\,\quad\quad\quad \partial_t \phi_2(0,x)=-\Psi_{20}(x).
\end{equation*}
It then follows that in $\Lambda$
$$
\phi_2(t,x)=-t\chi_2(x_1-t).
$$
Hence by the estimate in the lemma
$$
\|\phi_2(t,\cdot)\|_{H^2}=\infty,\qquad\text{if}\quad 0<t<1.
$$
On the other hand it easily follows from standard Strichartz estimates that
$$
\|\phi_2(t,\cdot)\|_{H^{2-\delta}}<\infty,\qquad\text{if}\quad \delta>0.
$$

\subsection{The wave coordinate condition}

We prefer to work with lower indices since the nonlinearity is more transparent in this case.  We collect two standard linear algebra results about the derivative of the determinant of a matrix and the inverse of a matrix:
\begin{lemma} Let $|g|=|\det g\,|$.
We have
\begin{align}
\pa_\al |g|&=|g|\,g^{\mu\nu}\, \pa_\al g_{\mu\nu}, \label{eq:det}\\
\pa_\al g^{\mu\nu}&=-g^{\mu\mu_1}\,g^{\nu\nu_1}\,\pa_\al g_{\nu_1\mu_1}\label{eq:inv}.
\end{align}
\end{lemma}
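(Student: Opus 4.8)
The plan is to derive each identity by differentiating a defining algebraic relation and applying the product rule, so no analysis is involved. I would begin with the inverse formula \eqref{eq:inv}, since it is the more elementary of the two. Starting from the defining identity $g^{\mu\lambda}g_{\lambda\nu}=\delta^\mu_\nu$, I apply $\pa_\al$ and the product rule to obtain $(\pa_\al g^{\mu\lambda})\,g_{\lambda\nu}+g^{\mu\lambda}\,\pa_\al g_{\lambda\nu}=0$, hence $(\pa_\al g^{\mu\lambda})\,g_{\lambda\nu}=-g^{\mu\lambda}\,\pa_\al g_{\lambda\nu}$. Contracting both sides with $g^{\nu\nu_1}$ and using $g_{\lambda\nu}g^{\nu\nu_1}=\delta_\lambda^{\nu_1}$ collapses the left-hand side to $\pa_\al g^{\mu\nu_1}$, giving $\pa_\al g^{\mu\nu_1}=-g^{\mu\lambda}g^{\nu\nu_1}\pa_\al g_{\lambda\nu}$; relabeling the summation indices yields exactly \eqref{eq:inv}, where the symmetry $g_{\lambda\nu}=g_{\nu\lambda}$ accounts for the index order written there.

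For the determinant formula \eqref{eq:det} I would use Jacobi's formula. The cofactor/adjugate identity gives $\pa(\det g)/\pa g_{\mu\nu}=\det(g)\,g^{\nu\mu}$, since the adjugate of $g$ equals $\det(g)$ times its inverse. Composing with the chain rule in the direction $\al$ then produces $\pa_\al \det g=\det(g)\,g^{\nu\mu}\,\pa_\al g_{\mu\nu}=\det(g)\,g^{\mu\nu}\,\pa_\al g_{\mu\nu}$, the last step again by symmetry of $g$. Equivalently, and perhaps more transparently, I could write $\det g=\exp\!\big(\operatorname{tr}\log g\big)$ and differentiate, using $\pa_\al\operatorname{tr}\log g=\operatorname{tr}(g^{-1}\pa_\al g)=g^{\mu\nu}\pa_\al g_{\nu\mu}$, which gives the identical result directly. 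Passing from $\det g$ to $|g|=|\det g|$ changes the formula only by the overall factor $\operatorname{sgn}(\det g)$, which cancels between the two sides.

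The only point deserving any care — and it is minor — is precisely this passage from $\det g$ to its absolute value, so I expect it to be the sole "obstacle." It is resolved by noting that $\det g$ never vanishes and varies continuously, hence is of constant sign on the connected region under consideration (here $g$ is a Lorentzian metric close to $m$, so $\det g<0$); the sign factor therefore drops out and \eqref{eq:det} follows. Everything else is a direct application of the product rule and cofactor expansion, with the symmetry of $g_{\mu\nu}$ used to match the exact index placements in \eqref{eq:det}--\eqref{eq:inv}.
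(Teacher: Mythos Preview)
Your proof is correct. The paper does not actually prove this lemma: it simply introduces it as ``two standard linear algebra results'' and moves on, so there is no argument in the paper to compare against. Your derivation of \eqref{eq:inv} by differentiating $g^{\mu\lambda}g_{\lambda\nu}=\delta^\mu_\nu$ and of \eqref{eq:det} via Jacobi's formula (or equivalently $\det g=\exp\operatorname{tr}\log g$) is the standard one, and your remark about the sign when passing from $\det g$ to $|\det g|$ is the only point worth mentioning and is handled correctly.
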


We convert the constraint equations
\[
\pa_\al (\sqrt{|g|}\, g^{\al\be})=0,\quad \be=0,\dots,3,
\]
using the Lemma above.
We get
\[
-\sqrt{|g|}\,g^{\al\al_1}g^{\be\be_1}\pa_\al g_{\al_1\be_1}+\frac{1}{2} g^{\al\be}\, \sqrt{|g|} \,g^{\mu\nu}\pa_\al g_{\mu\nu}=0.
\]
Apply $g_{\be\ga}$, divide by $\sqrt{|g|}$ and relabel the indices ($\al\rar \mu$, $\al_1 \rar \nu$),  to arrive at:
\bb
\label{eq:conslow}
-g^{\mu\nu}\pa_\mu g_{\nu\ga}+\frac{1}{2}g^{\mu\nu}\,\pa_\ga g_{\mu\nu}=0,
\ee
which is the form that we will use.

Write down the linearization of the wave coordinate condition (\ref{eq:conslow})
that for small $h$ is good approximation of the wave coordinate condition:
\bb
\label{eq:consep}
-m^{\mu\nu}\pa_{\mu}h_{\nu\ga}+\frac{1}{2}m^{\mu\nu}\, \pa_{\ga} h_{\mu\nu}=0.
\ee
Define the basis (null frame) by
\bb
\label{eq:nullfr}
L=\pa_t+\pa_{x_1},\quad \bl=\pa_t-\pa_{x_1},\quad\pa_A=\pa_{x_A},\quad A=2,3.
\ee
We use the basis from (\ref{eq:nullfr}) in (\ref{eq:consep}).
We have for $\ga=L$:
\[
\frac12 \pa_{\bl}h_{LL}+\frac12 \pa_L h_{\bl L}-\de^{AB}\pa_A h_{BL}+\frac12 \pa_L \left(-h_{L\bl}+\de^{AB}h_{AB} \right)=0,
\]
for $\ga=\bl$:
\[
\frac12 \pa_{\bl} h_{L \bl}+\frac12 \pa _L h_{\bl \bl}-\de^{AB}\pa_A h_{B\bl}+\frac12 \pa_\bl \left(-h_{L\bl}+\de^{AB}h_{AB} \right)=0,
\]
for $\ga=C\in \{2,3\}$:
\[
\frac12 \pa_{\bl} h_{L C}+\frac12 \pa _L h_{\bl C}-\de^{AB}\pa_A h_{BC}+\frac12 \pa_C \left(-h_{L\bl}+\de^{AB}h_{AB} \right)=0.
\]
In the first two equations, $h_{L\bl}$ coefficient cancels and therefore, we can write the linearized wave coordinate condition as follows:
\bse
\label{eq:Consh1}
\begin{align}
\pa_{\bl}h_{LL}-2\de^{AB}\pa_A h_{BL}+ \pa_L\left( \de^{AB}h_{AB}\right)&=0,
\label{eq:ConsL}
\\
\pa _L h_{\bl \bl}-2\de^{AB}\pa_A h_{B\bl}+\pa_\bl\left( \de^{AB}h_{AB}\right)&=0,
\label{eq:ConsbL}
\\
 \pa_{\bl} h_{L C}+ \pa _L h_{\bl C}-2\de^{AB}\pa_A h_{BC}+ \pa_C \left(-h_{L\bl}+\de^{AB}h_{AB} \right)&=0\label{eq:ConsC}.
\end{align}
\ese
Recall that our solution is
\begin{align*}
h_{AB}&\sim \phi_1,\\
h_{\bl\,\bl}&\sim \phi_2,
\end{align*}
where $\phi_1\sim \chi_1(x_1-t)$ and $\phi_2\sim -t\chi_2(x_1-t)$ inside the cone $|x|\leq 1-t$ with $\phi_1\in H^2$ while $\phi_2\in H^{2-\de}\setminus H^2$.
\subsubsection{Eliminating truly bad parts}
We would like to eliminate the components that are differentiated by $\bl$ in (\ref{eq:ConsL})-(\ref{eq:ConsC}) as they wouldn't have the same regularity as derivatives of $L,A$. Therefore, identifying these terms in (\ref{eq:ConsL})-(\ref{eq:ConsC}), respectively, we set
\begin{align}
h_{LL}&=0,\label{eq:hLL}
\\
\de^{AB}h_{AB}&=h_{22}+h_{33}=0, \label{eq:notrace} \\
\label{eq:hLC} h_{LC}&=0.
\end{align}
We can't set $h_{AB}=0$ but it is enough to have
\bb
\begin{split}
\label{eq:hprime}
h_{22}&=-h_{33}=\phi_1,\\
h_{23}&=0,
\end{split}
\ee
\subsubsection{Satisfying the first linearized wave coordinate condition (\ref{eq:ConsL})}
With (\ref{eq:hLL}),(\ref{eq:notrace}),(\ref{eq:hLC}), the first constraint is satisfied automatically.
\subsubsection{Satisfying the second linearized wave coordinate condition (\ref{eq:ConsbL})}
With the choice $h_{AB}$ as in (\ref{eq:hprime}), the constraint (\ref{eq:ConsbL}) becomes
\[
\pa_L h_{\bl \bl}-2\de^{AB}\pa_A h_{B\bl}=0.
\]
Since $\pa_L h_{\bl\bl}=-t\chi_2$ inside the cone, this suggests to define
\bb
\label{eq:hblC}
h_{B\bl}=-\frac14 x^B \chi_2.
\ee
Observe that $x^B \phi_2\in H^2$ as near the singular point $x_1=t$ of $\phi_2$, inside the cone, we have $|x^B|\ls (t-x_1)^\frac12$, which makes the appropriate expression integrable and prevents the singularity.
\subsubsection{Satisfying the third linearized wave coordinate condition (\ref{eq:ConsC})}
We have
\[
\pa_A h_{BC}=0,
\]
by (\ref{eq:hprime}). Also,
\[
\pa_L h_{B\bl}=0
\]
by (\ref{eq:hblC}). Combining this with (\ref{eq:hLL}),(\ref{eq:hLC}), we see that the last constraint (\ref{eq:ConsC}) is reduced to
\[
\pa_C h_{L\bl}=0,
\]
which suggests
\[
h_{L\bl}=0.
\]
To summarize, in the $L,\bl,\pa_2,\pa_3$ basis and in that order, $h_{\al\be}$ is
\[
h_{YZ}|_{\{|x|<1-t\}}=\begin{pmatrix}
0 & 0 & 0 & 0 \\ 0 & -t\,\chi_2 & -\frac{1}{4}x_2\, \chi_2 & -\frac{1}{4} x_3\chi_2 \\
0 &-\frac{1}{4}x_2\,\chi_2 & \chi_1 & 0 \\ 0 & -\frac{1}{4} x_3 \chi_2 & 0 & -\chi_1
\end{pmatrix}.
\]

\section{The solution inside the cone}
The goal of this section is to build on the ideas of Section \ref{sc:model} to obtain a solution of the Einstein equations inside the cone and wave coordinates for it, such that the metric in these coordinates has a finite $H^2$ norm at time zero and infinite at all other times. For this end,
let $D$ be the domain
\begin{equation}
 D=\{(t,x);  (x_1-1)^2 H (x_1-1)+x_2^2/4+x_3^2/4< (1-t)^2\},
 \end{equation}
 where
 $H(x_1-1)=1$, when $x_1<1$ and $H(x_1-1)=1/4$, when $x_1>1$.
Set $D_t=\{x;\, (t,x)\in D\}$.
Our goal is to prove the following statement.
\begin{theorem} \label{th:inside}
%There exists $\de>0$, such that t
There exist a space time $(D, g)$ and coordinates $x_{\al}:D\rar \R,\al=0,1,2,3$ such that
\begin{itemize}
\item The metric $g$ satisfies the Einstein vacuum equation
\[
\text{Ric}(g)=0,\quad\text{ on } D.
\]
\item The coordinates $x_{\al}$ are wave coordinates
\[
\pa_\al (\gv g^{\al\be})=0,\quad \text{ on } D\quad \be=0,1,2,3.
\]
\item The metric $g$ has finite initial data in $H^2(D_0)\times H^1(D_0)$:
\[
\|g_{\al\be}\|_{H^2(D_0)}+\|\pa_t g_{\al\be}\|_{H^1(D_0)}<\infty,\quad \al,\be\in \{0,1,2,3\}.
\]
\item The $H^2(D_{t})$ norm of $g_{00}$ at any other time $t$ is infinite:
\[
\|g_{00}\|_{H^2(D_{t})}=\infty,\quad \forall t\in (-1,1)\setminus \{0\}.
\]
\end{itemize}
\end{theorem}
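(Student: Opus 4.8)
The plan is to take the explicit metric recorded in the table above as the definition of $g$, with $\chi_1$ the profile of Lemma~\ref{lm:chi1} and $\tilde{\chi}_2$ the solution of the Riccati equation
\[
\tilde{\chi}_2' = 2(\chi_1')^2(1+\chi_1)^{-2} + \tfrac{1}{16}\tilde{\chi}_2^{\,2}, \qquad \tilde{\chi}_2(0)=0,
\]
and then to verify the four assertions in turn. I would first note that this ODE has a solution on the whole relevant range of $q=x_1-t$ when $\epsilon$ is small: the forcing $2(\chi_1')^2\sim \epsilon^2|\log|q/4||^{2\alpha}$ is integrable over the bounded $q$-interval, so a standard continuation argument keeps $\tilde{\chi}_2=O(\epsilon^2)$ and rules out the finite-time blow-up of the Riccati nonlinearity. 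Moreover $\tilde{\chi}_2'-\chi_2'=2(\chi_1')^2[(1+\chi_1)^{-2}-1]+\tfrac{1}{16}\tilde{\chi}_2^{\,2}$ is a sum of terms strictly less singular than $(\chi_1')^2$, so $\tilde{\chi}_2$ and $\chi_2$ share the same leading singularity at $q=0$; this is the only property of $\tilde{\chi}_2$ the blow-up step will use.

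The heart of the matter, and the step I expect to be the main obstacle, is the verification that $\text{Ric}(g)=0$. I would compute the connection coefficients and curvature of $g$ in the null frame $L,\bl,\pa_2,\pa_3$, exploiting that every entry of the table is a function of $q=x_1-t$ alone, times at most a single linear factor $x^B$; this annihilates most derivatives and, after a careful but finite computation, should leave a single independent Riemann component $R_{A\bl B\bl}$. Granting this, Ricci-flatness is nearly automatic: the inverse-metric table has $g^{\bl\bl}=0$, so in $R_{\mu\nu}=g^{\rho\sigma}R_{\rho\mu\sigma\nu}$ every contraction except the one giving $R_{\bl\bl}=g^{AB}R_{A\bl B\bl}$ either carries a factor $g^{\bl\bl}$ or pairs indices incompatibly with $R_{A\bl B\bl}$, and hence vanishes. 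The displayed Riccati equation is exactly the relation $R_{\bl\bl}=0$, so $\text{Ric}(g)\equiv0$.

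The wave-coordinate condition and the finiteness of the data are then comparatively routine. I would check directly from the table that $\det g$ is constant (equal to $-1$ in the coordinates $x_\al$), so that $\gv$ is constant and the condition reduces to $\pa_\al g^{\al\be}=0$; this I would read off the inverse-metric table, the only nontrivial case being $\be=L$, where the off-diagonal entries $g^{AL}=-\tfrac18 x^A\tilde{\chi}_2$ and the term $\tfrac{t}{4}\tilde{\chi}_2$ appearing in $g^{LL}$ furnish exactly cancelling contributions once one uses $\pa_L\tilde{\chi}_2=\pa_L\chi_1=0$. For finiteness of the data at $t=0$ I would invoke the three regularity mechanisms already isolated in Section~\ref{sc:model}: $\chi_1\in H^2$ controls $g_{22}$ and $g_{33}$; the prefactor $t$ makes $g_{\bl\bl}=-t\tilde{\chi}_2$ vanish at $t=0$ while $\pa_t g_{\bl\bl}|_{t=0}=-\tilde{\chi}_2\in H^1$; and the transverse factor $x^B$ gains one power of the linearly-vanishing cross-sectional area, so that $\int_{D_0}|x^B\tilde{\chi}_2''|^2\,dx\sim \int|\log|x_1/4||^{2(2\alpha-1)}\,dx_1<\infty$ even though $\int_{D_0}|\tilde{\chi}_2''|^2\,dx=\infty$.

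Finally, for the blow-up I would compute $g_{00}=\tfrac14(g_{LL}+2g_{L\bl}+g_{\bl\bl})=-1-\tfrac14\,t\,\tilde{\chi}_2(x_1-t)$, whence $\pa_{x_1}^2 g_{00}=-\tfrac14\,t\,\tilde{\chi}_2''(x_1-t)$. Since $\tilde{\chi}_2''$ inherits the leading singularity $\sim|\log|q/4||^{2\alpha-1}/q$ of $\chi_2''$ at $q=0$, and the cross-section of $D_t$ near the singular edge $x_1=t$ still vanishes only linearly for every $t\in(-1,1)$, the weighted integral $\int_{D_t}|t\,\tilde{\chi}_2''(x_1-t)|^2\,dx$ reduces, exactly as in the case $i=2$ of Lemma~\ref{lm:chi1}, to a nonzero multiple of $\int|\log|q/4||^{2(2\alpha-1)}\,dq/q=\infty$ whenever $t\neq0$. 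Hence $\|g_{00}\|_{H^2(D_t)}=\infty$ for all $t\in(-1,1)\setminus\{0\}$, which is the last assertion.
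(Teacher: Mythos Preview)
Your proposal is correct and follows essentially the same route as the paper: the identical explicit metric, the same Riccati ODE for $\tilde{\chi}_2$, reduction of $\text{Ric}(g)=0$ to the single component $R_{\bl\bl}=g^{AB}R_{A\bl B\bl}$, and the same $H^2$ finiteness/blow-up computations for $\chi_1$, $t\tilde{\chi}_2$, and $x^B\tilde{\chi}_2$. The only cosmetic difference is that you verify the wave-coordinate condition in the form $\pa_\alpha g^{\alpha\beta}=0$ read off from the inverse-metric table, whereas the paper checks the equivalent lowered form $g^{\mu\nu}\pa_\mu g_{\nu\gamma}=0$; since $\det g$ is constant these are the same computation.
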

We prove the theorem by describing an explicit example for such a metric $g$ and coordinates $x_\al$. The coordinates $x_{\al}$ are  the standard coordinates on $\R^{1+3}$
\[
x_{\al}((y_0,y_1,y_2,y_3))=\de_{\al}^\be y_\be.
\]
We will also write $t=x_0$ We use the rest of this section to specify $g$ and verify that the hypotheses of Theorem \ref{th:inside} are satisfied. We define the following vector fields:
\bb
\begin{split} L&=\pa_t+\pa_{x_1},\\
\bL&=\pa_t-\pa_{x_1}.
\end{split}
\ee
We complete $\{L,\bl\}$ to a basis by adding $\pa_A=\pa_{x_A},A=2,3$. In what follows we will use $A,B$ to denote an index from a set $\{2,3\}$. Since $L,\bl$ are constant coefficient vector fields, we will abuse the notation and treat $L,\bl$ as fictitious indices as well. For example $\pa_{L}f=\pa_{t}f+\pa_{x_1}f$ or $g_{L\bL}=\la L, \bl \ra_{g}$.
\begin{rema} Since  $\{ L,\bl,\pa_A| A=2,3\}$ forms a basis and have constant coefficient we use this basis instead of the standard one in all subsequent derivations.
\end{rema}
We can now specify the metric.
\begin{defin}
\label{df:metric}
 The nonzero coefficients of  the metric $g$ in the basis $\{L,\bL,\pa_2,\pa_3\}$ are as follows
\begin{align}
g_{L\bL}&=-2\\
g_{\bL\bL}&=-t \tilde{\chi}_2(x_1-t),\\
g_{AB}&=\de_{AB}\chi_{1A}(x_1-t),\\
g_{A\bL}&=-\frac{1}{4}x_A \chi_{2A}(x_1-t),
\end{align}
where
%\begin{align*}
%\end{align*}
\[
\chi_{12}=1+\chi_1=\frac{1}{\chi_{13}}.
\]
Here
$\chi_1$ was defined in Lemma \ref{lm:chi1}, $\tilde{\chi}_2$ is a slight modification of $\chi_2$ that will be defined in Lemma \ref{lm:Ric} below and
\[
\chi_{2A}=\chi_{1A}\tilde{\chi}_2.
\]
The rest of the coefficients are given by symmetry.
\end{defin}
\begin{rema} Unless we specify otherwise, the argument of the $\chi$-functions will be $x_1-t$.
\end{rema}
The coefficients of $g$ are summarized in Table \ref{tb:gdown} below.
%\begin{table}
%\begin{tabular}{|l|ccc|}
%\hline
%\diagbox{Time}{Day} & Mon & Tue & Wed \\
%\hline
%Morning & used & used & \\
%Afternoon & & used & used \\
%\hline
%\end{tabular}
\begin{table}[ht]
\begin{tabular}{|l||c|c|c|c|}
\hline
%\diagbox{$Y=$}{$Z=$}
$g_{YZ}$

 & $L$ & $\bL$ & 2 & 3 \\[10pt]
\hline
\hline
$L$ &0 & -2 & 0 &0 \\[10pt]
\hline
$\bl$& -2 &  $-t\tilde{\chi}_2$ & $-\frac{1}{4}x_2(1+\chi_1)\tilde{\chi}_2$ & $-\frac{1}{4}(1+\chi_1)^{-1}x_3\tilde{\chi}_2$\\[10pt]
\hline
$2$ & 0 &$-\frac{1}{4}x_2(1+\chi_1)\tilde{\chi}_2$ & $1+\chi_1$ & 0\\[10pt]
\hline
3 & 0 &$-\frac{1}{4}x_3(1+\chi_1)^{-1}\tilde{\chi}_2$ & 0 & $(1+\chi_1)^{-1}$ \\[10pt]
\hline
\end{tabular}
\caption{The coefficients $g_{YZ}$ of the metric}
\label{tb:gdown}
\end{table}
Thus we reduce the proof of Theorem \ref{th:inside} to the following three Lemmas
\begin{lemma}
\label{lm:Ric}
Let $\tilde{\chi}_2$ satisfy
\[
\tilde{\chi}_2'-2(\chi'_1)^2(1+\chi_1)^{-2}-\frac{1}{16}\tilde{\chi}_2^2=0,
\]
then the metric $g$ defined in Definition \ref{df:metric} satisfies
\[
\text{Ric}(g)=0.
\]
\end{lemma}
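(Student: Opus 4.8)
The plan is to compute the Riemann tensor of $g$ directly in the frame $\{L,\bL,\pa_2,\pa_3\}$, show that up to symmetries its only nonzero components are $R_{A\bL B\bL}$, and then read off that the single surviving Ricci component $R_{\bL\bL}=g^{AB}R_{A\bL B\bL}$ vanishes precisely when the stated ODE holds. Two preliminary observations turn the frame computation into a coordinate computation. First, $L,\bL,\pa_2,\pa_3$ have constant coefficients and commute, so they are the coordinate frame $\pa_v,\pa_u,\pa_2,\pa_3$ for the null coordinates $v=(t+x_1)/2$, $u=(t-x_1)/2$, and the ordinary formula $\Gamma_{ab}{}^c=\frac12 g^{cd}(\pa_a g_{bd}+\pa_b g_{ad}-\pa_d g_{ab})$ applies with no frame corrections. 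Second, since $x_1-t=-2u$, every $\chi$-function is a function of $u$ alone, so $\pa_L=\pa_v$ kills all metric coefficients except for the explicit factor $t=u+v$ in $g_{\bL\bL}$, giving $\pa_L g_{\bL\bL}=-\tilde{\chi}_2$ and $\pa_L g_{ab}=0$ otherwise.

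I would first check from Table \ref{tb:gdown} that the frame determinant equals $-4$, hence $\det g=-1$ and $\gv=1$, and compute the inverse metric; the entries I will use repeatedly are the vanishing ones $g^{\bL\bL}=0$ and $g^{\bL A}=0$, together with $g^{\bL L}=-\frac12$. Combined with the fact that every coefficient $g_{Lb}$ is constant ($g_{LL}=0$, $g_{L\bL}=-2$, $g_{LA}=0$), these identities show that $L_\flat=-2\,du$ is closed, so $L$ is geodesic, and they force most Christoffel symbols to vanish. The nonconstant coefficients of Definition \ref{df:metric} are only $g_{\bL\bL}$, $g_{\bL A}$ and $g_{AB}$, so the nonzero metric derivatives form the short list $\pa_L g_{\bL\bL}$, $\pa_{\bL}g_{\bL\bL}$, $\pa_{\bL}g_{\bL A}$, $\pa_B g_{\bL A}$, $\pa_{\bL}g_{AB}$; substituting them into the Christoffel formula and contracting against the inverse metric (whose only upper-$\bL$ entry is $g^{\bL L}$) yields a manageable table of $\Gamma$'s.

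With the Christoffel symbols in hand I would assemble $R_{abcd}$ and verify, block by block, that all components except $R_{A\bL B\bL}$ (and its images under the curvature symmetries) vanish. The Ricci tensor then has at most one nonzero entry: contracting the single surviving curvature block forces an upper $\bL$ index unless both free indices are transverse, and $g^{\bL\bL}=g^{\bL A}=0$ annihilates those contractions, for instance $R_{AB}=g^{\bL\bL}R_{A\bL B\bL}=0$, leaving only $R_{\bL\bL}=g^{AB}R_{A\bL B\bL}$. Evaluating this contraction explicitly and setting it to zero produces exactly $\tilde{\chi}_2'-2(\chi_1')^2(1+\chi_1)^{-2}-\frac1{16}\tilde{\chi}_2^2=0$, the hypothesis of the lemma; hence $\text{Ric}(g)=0$.

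The main obstacle is the step showing that every curvature component other than $R_{A\bL B\bL}$ vanishes, because $g$ is not a textbook plane wave: the explicit $t=u+v$ in $g_{\bL\bL}$ and the $x_A$ in $g_{\bL A}$ generate genuinely nonzero Christoffel symbols such as $\Gamma_{\bL\bL}{}^{\bL}=-\frac14\tilde{\chi}_2$ and $\Gamma_{AB}{}^{L}$, whose $\pa\Gamma$ and $\Gamma\Gamma$ contributions must be seen to cancel in the unwanted components. I expect these cancellations to hinge on three designed features of the ansatz: the closedness of $L_\flat=-2\,du$ (so the $v$-dependence is only linear and $L$ is geodesic), the identities $g^{\bL\bL}=g^{\bL A}=0$, and the volume normalization $\chi_{12}\chi_{13}=1$, which makes $\pa_{\bL}g_{AB}$ trace-free with respect to the transverse metric and is responsible for the clean shear term $2(\chi_1')^2(1+\chi_1)^{-2}$ in the final ODE. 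Once the vanishing is established, extracting $R_{\bL\bL}$ and the ODE is a direct if lengthy calculation.
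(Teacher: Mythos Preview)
Your plan is essentially the paper's own proof: compute the Christoffel symbols in the constant-coefficient null frame, argue that the only nonvanishing curvature block is $R_{A\bL B\bL}$ (the paper does this by splitting $R=R^{\text{lin}}+R^{\text{quad}}$ and checking separately the cases ``at least one $L$ index'' and ``at least three $A,B,C$ indices''), and then contract with $g^{AB}$ to obtain $R_{\bL\bL}=\tilde{\chi}_2'-2(\chi_1')^2(1+\chi_1)^{-2}-\tfrac{1}{16}\tilde{\chi}_2^2$. Your conceptual remarks about the closedness of $L_\flat$ and the volume normalization $\chi_{12}\chi_{13}=1$ are exactly the structural reasons behind the cancellations the paper verifies by hand; the only cosmetic discrepancy is that the paper records $\sqrt{|g|}=2$ (the frame value) rather than your coordinate value $1$, which is irrelevant here.
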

\begin{lemma}
\label{lm:wave}
Let $g$ be the metric defined in Definition \ref{df:metric} then the standard coordinates satisfy the wave coordinate condition (\ref{eq:WaveCordinate}).
\end{lemma}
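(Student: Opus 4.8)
The plan is to exploit a special feature of the metric in Definition \ref{df:metric}: its determinant is constant. First I would compute $\det g$ in the null frame $\{L,\bL,\pa_2,\pa_3\}$. Expanding the determinant of the matrix in Table \ref{tb:gdown} along the first row (whose only nonzero entry is $g_{L\bL}=-2$) and then along the first column of the resulting minor, all the off-diagonal $\tilde{\chi}_2$-terms drop out and one is left with $\det(g_{YZ})=-4\,g_{22}g_{33}=-4(1+\chi_1)(1+\chi_1)^{-1}=-4$. Since the null frame is obtained from the standard coordinate frame by a constant linear change of basis, $\det(g_{YZ})$ and $\det(g_{\al\be})$ differ by the square of the (constant) Jacobian, here a factor $4$, so $\det(g_{\al\be})=-1$ and $\gv\equiv 1$. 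Consequently the wave coordinate condition $\pa_\al(\gv g^{\al\be})=0$ collapses to the linear divergence condition $\pa_\al g^{\al\be}=0$; this is the decisive simplification produced by the normalization $g_{22}g_{33}=1$.

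Second, I would rewrite the divergence in the null frame. Because $\pa_t=\tfrac12(L+\bL)$ and $\pa_{x_1}=\tfrac12(L-\bL)$ are constant-coefficient combinations, a one-line computation gives $\pa_\al V^\al=\pa_L V^L+\pa_{\bL}V^{\bL}+\pa_2 V^2+\pa_3 V^3$ for any vector field $V$. Applying this with $V^\al=g^{\al Y}$ and contracting the free index into the (constant) null frame as well, the condition becomes
\[
\pa_L g^{LY}+\pa_{\bL}g^{\bL Y}+\pa_2 g^{2Y}+\pa_3 g^{3Y}=0,\qquad Y\in\{L,\bL,2,3\},
\]
where the $g^{YZ}$ are the inverse-metric frame components obtained by inverting Table \ref{tb:gdown}. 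The structural facts I would record here are that every $\chi$-function depends only on $x_1-t$ and is therefore annihilated by $L=\pa_t+\pa_{x_1}$, and that the only metric component carrying an explicit factor of $t$ (rather than $t$ entering through $x_1-t$) is $g_{\bL\bL}=-t\tilde{\chi}_2$.

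Third, I would check the four components. For $Y=\bL$ the only term is $\pa_L g^{L\bL}=\pa_L(-\tfrac12)=0$. For $Y=2,3$ the surviving terms are $\pa_L(-\tfrac18 x_A\tilde{\chi}_2)$ and $\pa_A g^{AA}$; the first vanishes since $\tilde{\chi}_2$ is a function of $x_1-t$ and $x_A$ is $L$-constant, and the second vanishes since $g^{22}=(1+\chi_1)^{-1}$ and $g^{33}=1+\chi_1$ depend only on $x_1-t$. The genuinely nontrivial case is $Y=L$: using $\pa_2 g^{2L}+\pa_3 g^{3L}=-\tfrac18\tilde{\chi}_2-\tfrac18\tilde{\chi}_2$, the condition reduces to $\pa_L g^{LL}-\tfrac14\tilde{\chi}_2=0$. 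To close it I would compute $g^{LL}$ by cofactors: since $g_{\bL\bL}=-t\tilde{\chi}_2$ is the unique source of explicit $t$-dependence, $g^{LL}=\tfrac{t}{4}\tilde{\chi}_2+(\text{terms depending only on }x_1-t,x_2,x_3)$, the coefficient $\tfrac14$ arising precisely because the accompanying minor equals $g_{22}g_{33}=1$. Then $\pa_L g^{LL}=\tfrac14(\pa_L t)\tilde{\chi}_2=\tfrac14\tilde{\chi}_2$, which cancels the remaining $-\tfrac14\tilde{\chi}_2$.

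I expect the main obstacle to be the bookkeeping in the $Y=L$ case: one must verify that $g^{LL}$ has no explicit $t$-dependence beyond the single linear term $\tfrac{t}{4}\tilde{\chi}_2$ and that its coefficient is exactly $\tfrac14$. This is where the ansatz is rigged: the factors $-\tfrac14 x_A$ in $g_{A\bL}$, the normalization $g_{22}g_{33}=1$, and the explicit $-t$ in $g_{\bL\bL}$ are chosen precisely so that the three contributions $\tfrac14\tilde{\chi}_2$, $-\tfrac18\tilde{\chi}_2$, $-\tfrac18\tilde{\chi}_2$ sum to zero. Once the $L$-invariance of the $\chi$-functions is in hand, the remaining three components are immediate.
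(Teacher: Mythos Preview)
Your proof is correct and follows the same overall strategy as the paper: use that $\det g$ is constant to strip $\sqrt{|g|}$ from the wave coordinate condition, pass to the constant null frame $\{L,\bL,\pa_2,\pa_3\}$, and check the four components case by case, exploiting that every $\chi$-function is annihilated by $\pa_L$.

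The one genuine difference is the form in which you verify the condition. You work directly with the upper-index divergence $\pa_\al g^{\al\be}=0$, so your nontrivial case is $\be=L$ and you must control $\pa_L g^{LL}$; this forces you to argue that the complicated component $g^{LL}$ has only the single explicit $t$-term $\tfrac14 t\tilde\chi_2$. The paper instead contracts once more with $g_{\be\ga}$ to obtain the equivalent lower-index form $g^{\mu\nu}\pa_\mu g_{\nu\ga}=0$, checks $\ga\in\{L,\bL,A\}$, and never needs $g^{LL}$ at all: the nontrivial case becomes $\ga=\bL$, which reduces to $g^{L\bL}\pa_L g_{\bL\bL}+g^{AB}\pa_A g_{B\bL}=0$ and closes from the simple identities $\pa_L(t\tilde\chi_2)=\tilde\chi_2$ and $\pa_A g_{B\bL}=-\tfrac14\de_{AB}\chi_{2A}$. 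Your route is the one sketched informally in the introduction; the paper's route in the actual proof is marginally cleaner because it sidesteps the ``Horror'' component, but both are short and both work.
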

\begin{lemma}
\label{lm:norms}The metric $g$ in Definition \ref{df:metric} satisfies
\begin{align*}
\|g_{\al\be}\|_{H^2(D_0)}+\|\pa_t g_{\al\be}\|_{H^1(D_0)}&<\infty,\quad \al,\be\in \{0,1,2,3\},\\
\|g_{00}\|_{H^2(D_{\de'})}&=\infty,\quad \forall \de'\in (-\de,\de)\setminus \{0\}.
\end{align*}
\end{lemma}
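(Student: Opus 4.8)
The plan is to pass from the null-frame components of Definition \ref{df:metric} to the standard coordinate components $g_{\al\be}=g(\pa_{x_\al},\pa_{x_\be})$, and then to track which components carry the singularity of $\tilde\chi_2$. Using $\pa_t=\tfrac12(L+\bl)$, $\pa_{x_1}=\tfrac12(L-\bl)$, $\pa_{x_A}=\pa_A$ and reading off Table \ref{tb:gdown}, one finds in particular
\[
g_{00}=\tfrac14\big(g_{LL}+2g_{L\bl}+g_{\bl\bl}\big)=-1-\tfrac{t}{4}\,\tilde\chi_2(x_1-t).
\]
More generally every component is, up to constants, a combination of three types of term: $\chi_1(x_1-t)$ (entering $g_{x_Ax_B}$), the diagonal term $t\,\tilde\chi_2(x_1-t)$ (entering $g_{00}$, $g_{0x_1}$, $g_{x_1x_1}$), and $x_A\,\tilde\chi_2(x_1-t)$ times a smooth bounded function of $\chi_1$ (entering $g_{0x_A}$, $g_{x_1x_A}$). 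The whole construction is arranged so that at $t=0$ each singular contribution is switched off by the prefactor $t$ or by the prefactor $x_A$, while for $t\neq0$ the term $t\,\tilde\chi_2$ in $g_{00}$ survives with no compensating gain.

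Next I would record the two profiles. The function $\chi_1$ is exactly the one of Lemma \ref{lm:chi1}, so $\chi_1(x_1-t)\in H^2(D_t)$ locally. For $\tilde\chi_2$ I would solve the Riccati equation of Lemma \ref{lm:Ric},
\[
\tilde\chi_2'=2(\chi_1')^2(1+\chi_1)^{-2}+\tfrac1{16}\tilde\chi_2^2,\qquad \tilde\chi_2(0)=0,
\]
observing that for $\epsilon$ small the solution exists on the relevant interval with $\tilde\chi_2=O(\epsilon^2)$. Since $(1+\chi_1)^{-2}=1+O(\epsilon)$ and the quadratic term is bounded and $O(\epsilon^4)$, one has $\tilde\chi_2'=\chi_2'+(\text{more regular})$ with $\chi_2'=2(\chi_1')^2$; differentiating once more shows that $\tilde\chi_2''$ carries exactly the leading singularity of $\chi_2''$, namely $|\tilde\chi_2''(s)|\sim \epsilon^2|s|^{-1}|\log|s/4||^{2\alpha-1}$ as $s\to0$, with a bounded remainder. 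This is the step I expect to be the main obstacle: one must verify that the passage from $\chi_2$ to $\tilde\chi_2$ does not cancel the singularity responsible for the blow-up.

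The finiteness at $t=0$ then splits by term type. The $\chi_1$-components are handled by Lemma \ref{lm:chi1}. In $g_{00},g_{0x_1},g_{x_1x_1}$ the term $t\,\tilde\chi_2$ vanishes at $t=0$, and its $\pa_t$ evaluates to a multiple of $\tilde\chi_2(x_1)$ there, which lies in $H^1$ because $\tilde\chi_2$ and $\tilde\chi_2'\sim\epsilon^2|\log|\cdot/4||^{2\alpha}$ are in $L^2_{\mathrm{loc}}$. For the cross terms I use the geometric gain in $D_0$: near the singular ray the transverse disk has area $\sim x_1$, so $\int_{D_0}x_A^2|\tilde\chi_2''(x_1)|^2\,dx\sim \int_0 x_1^2\,\epsilon^4x_1^{-2}|\log|x_1/4||^{2(2\alpha-1)}\,dx_1<\infty$ (the exponent $2(2\alpha-1)\in(-1,0)$ keeps the integrand bounded), while the other second derivatives of $x_A\tilde\chi_2$ involve only $\tilde\chi_2'\in L^2$; hence $x_A\tilde\chi_2\in H^2(D_0)$. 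Multiplying by $1+\chi_1$ or $(1+\chi_1)^{-1}$, which lie in $H^2\cap L^\infty$, preserves $H^2$ and $H^1$ by the three-dimensional product estimates $\|fg\|_{H^2}\lesssim\|f\|_{H^2}\|g\|_{H^2}$ and $\|fg\|_{H^1}\lesssim\|f\|_{H^2}\|g\|_{H^1}$ (the $\pa_t$-derivatives needed for the $H^1$ bound are one order lower and handled the same way). This yields the first displayed line of the Lemma.

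Finally, for the blow-up at $t=\de'\neq0$, since $g_{00}=-1-\tfrac{t}{4}\tilde\chi_2(x_1-t)$ and the constant is harmless, it suffices to show $\int_{D_t}|\tilde\chi_2''(x_1-t)|^2\,dx=\infty$. Exactly as in the second computation of Lemma \ref{lm:chi1}, for fixed $x_1>t$ the section of $D_t$ has area $\sim(1-t)(x_1-t)$, so with $s=x_1-t$,
\[
\int_{D_t}|\tilde\chi_2''(x_1-t)|^2\,dx\sim(1-t)\int_0 |\tilde\chi_2''(s)|^2\,s\,ds\sim (1-t)\epsilon^4\int_0\frac{|\log|s/4||^{2(2\alpha-1)}}{s}\,ds=\infty,
\]
the divergence being precisely the condition $\alpha>1/4$. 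Since $t=\de'\neq0$ the prefactor $t/4$ does not vanish, so $\|g_{00}\|_{H^2(D_{\de'})}=\infty$, which is the second line of the Lemma.
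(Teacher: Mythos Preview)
Your proposal is correct and follows essentially the same approach as the paper: reduce to the analysis of $\tilde\chi_2$ via the Riccati ODE, show that $\tilde\chi_2''$ carries the same leading singularity as $\chi_2''$, and then reuse the two integral computations of Lemma~\ref{lm:chi1} (area weight $\sim s$ for the blow-up, extra factor $x_A^2\sim s$ for the finiteness of $x_A\tilde\chi_2$). The paper's proof is terser---it simply states that it suffices to show $\tilde\chi_2\in H^1(D_t)\setminus H^2(D_t)$ and $x^A\tilde\chi_2\in H^2(D_0)$ and then carries out exactly those integrals---while you additionally spell out the null-frame-to-coordinate conversion and invoke the $H^2$ algebra property to handle the smooth factors $(1+\chi_1)^{\pm1}$; these are points the paper leaves implicit.
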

The Lemmas \ref{lm:Ric}, \ref{lm:wave} are given by direct computation for which we will provide some intermediate steps. The following statement is a straightforward observation
\begin{claim}
We have the following equalities
\begin{enumerate}
\label{cl:linalg}
\item
\[
\gv=2.
\]
\item The non-zero coefficients of the inverse metric $g^{YZ}$ are as follows:
\begin{align*}
g^{LL}&=\frac{1}{4}t\tilde{\chi}_2+\frac{1}{64}\Big( x_2^2(1+\chi_1)+x_3^2(1+\chi_1)^{-1}\Big)\tilde{\chi}_2^2,\\
g^{LA}&=-\frac{1}{8}x_A \chi_{2A}(\chi_{1A})^{-1},\\
g^{L\bL}&=-\frac{1}{2}\\
g^{AB}&=\de_{AB}(\chi_{1A})^{-1},\\
\end{align*}
and their symmetric counterparts.
\end{enumerate}
\end{claim}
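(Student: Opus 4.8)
The plan is to treat the claim as what it is: a direct linear-algebra computation on the $4\times 4$ matrix of frame components $g_{YZ}$, in the order $(L,\bL,2,3)$, recorded in Table~\ref{tb:gdown}. The single structural feature I would exploit throughout is that $L$ is null and couples only to $\bL$: the entries $g_{LL}$, $g_{L2}$, $g_{L3}$ all vanish, so the $L$-row and $L$-column each carry the one nonzero entry $g_{L\bL}=-2$. This sparsity collapses both the determinant and the inversion to short computations, and the only algebraic input beyond bookkeeping is the telescoping $g_{22}g_{33}=(1+\chi_1)(1+\chi_1)^{-1}=1$.

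For the first part I would expand the determinant along the $L$-row. Since only $g_{L\bL}$ survives, the cofactor expansion reduces to a single $3\times3$ minor and yields
\[
\det(g_{YZ}) = -\,g_{L\bL}^{\,2}\big(g_{22}g_{33}-g_{23}^2\big) = -4\,(1+\chi_1)(1+\chi_1)^{-1} = -4 ,
\]
the point being that the $\tilde{\chi}_2$-dependent $\bL$-entries drop out of the relevant minor and the product $g_{22}g_{33}$ telescopes to $1$. Hence $|\det(g_{YZ})|$ equals the constant $4$ and $\gv=2$. (Because $\{L,\bL,\pa_2,\pa_3\}$ is a \emph{constant-coefficient} frame, this differs from the determinant of the coordinate components only by the fixed factor $(\det\Lambda)^2=4$, where $\Lambda$ is the constant matrix expressing the frame in the coordinate basis; what the later lemmas actually use is only that $\gv$ is a nonzero constant, so that the wave coordinate condition reduces to $\pa_\al g^{\al\be}=0$.)

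For the second part I would solve $g^{YW}g_{WZ}=\de^Y{}_Z$ column by column, again reading off the $L$-direction first. The equations with $Z=L$ force $g^{L\bL}=(g_{L\bL})^{-1}=-\tfrac12$ together with $g^{\bL\bL}=0$ and $g^{\bL A}=0$, so the entire $\bL$-row of the inverse vanishes apart from $g^{\bL L}$. The remaining unknowns are then pinned down by the $\{2,3\}$ block and its coupling to $\bL$: one gets $g^{AB}=\de^{AB}(\chi_{1A})^{-1}$ directly, and $g^{LA}=-\tfrac18 x_A\chi_{2A}(\chi_{1A})^{-1}$ after the cancellation produced by the specific coefficients $-\tfrac14$ in $g_{A\bL}$ and $-\tfrac18$ in $g^{LA}$; finally $g^{LL}$ is the only component carrying the quadratic term $\tfrac1{64}\big(x_2^2(1+\chi_1)+x_3^2(1+\chi_1)^{-1}\big)\tilde{\chi}_2^2$ and the piece $\tfrac14 t\tilde{\chi}_2$, and it is fixed by the $(L,L)$ equation. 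In practice the cleanest route is simply to multiply the two tables and verify $g^{YW}g_{WZ}=\de^Y{}_Z$ entrywise, a short finite check. I do not expect a genuine obstacle here; the one place demanding care is precisely these $\tilde{\chi}_2$- and $x_A$-dependent cross terms, whose contributions to the $(\bL,L)$ and $(A,L)$ products must cancel against $g^{LL}$ — which is exactly what forces the stated formula for $g^{LL}$ — together with keeping straight the normalization that fixes $\gv=2$.
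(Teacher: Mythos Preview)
Your approach is correct and is exactly what the paper intends: it offers no proof at all, merely labeling the claim ``a straightforward observation,'' so your column/row expansion along the $L$-direction and entrywise verification of $g^{YW}g_{WZ}=\de^Y{}_Z$ is precisely the direct check being tacitly invoked. Your remark about the frame-versus-coordinate determinant is apt (the $-4$ you compute is the determinant of $g_{YZ}$ in the null frame, related to the coordinate determinant by the constant factor $(\det\Lambda)^2=4$); the paper is silent on this point, and as you note, only the constancy of $\gv$ is used downstream.
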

We summarize $g^{YZ}$ in Table \ref{tb:gup} below. \begin{table}[ht]
\begin{tabular}{|l||c|c|c|c|}
\hline
%\diagbox{$Y=$}{$Z=$}
%\diagbox{$Y=$}{$Z=$}
$g^{YZ}$ & $L$ & $\bL$ & 2 & 3 \\[10pt]
\hline
\hline
$L$ &$\frac{1}{4}t\tilde{\chi}_2+\frac{1}{64}\big( x_2^2(1+\chi_1)+x_3^2(1+\chi_1)^{-1}\big)\tilde{\chi}_2^2$ & $-\frac{1}{2}$ & $-\frac{1}{8}x_2\tilde{\chi}_2$ & $-\frac{1}{8}x_3\tilde{\chi}_2$ \\[10pt]
\hline
$\bl$& -$\frac{1}{2}$ &  0  & 0 &0\\[10pt]
\hline
$2$ & $-\frac{1}{8}x_2\tilde{\chi}_2$ &$0$  & $(1+\chi_1)^{-1}$ & 0\\[10pt]
\hline
3 & $-\frac{1}{8}x_3\tilde{\chi}_2$  &0 & 0 & $1+\chi_1$ \\[10pt]
\hline
\end{tabular}
\caption{The coefficients $g^{YZ}$ of the metric.}
\label{tb:gup}
\end{table}
\begin{proof}[Proof of Lemma \ref{lm:Ric}]
We use the slightly nonstandard definition of Christoffel symbols from \cite[(3.1)]{LR}:
%\begin{align*}
\bb
\label{eq:Chr}
\begin{split}
\G{\al}{\be}{\ga}&=\frac{1}{2}g^{\be\de}\left( \pa_\al g_{\de\ga}+\pa_{\ga}g_{\de\al}-\pa_{\de}g_{\al\ga}\right),\\
\Ga_{\al\be\ga}&=g_{\be\de}\G{\al}{\de}{\ga}=\frac{1}{2}\left( \pa_\al g_{\be\ga}+\pa_{\ga}g_{\be\al}-\pa_{\be}g_{\al\ga}\right).
\end{split}
\ee
The following Christoffel symbols are not zero:
%The following ones are not zero
\begin{align*}
\Ga_{L\bl\bl}=\Ga_{\bl\bl L}=-\Ga_{\bl L \bl}
=&-\frac{\tilde{\chi_2}}{2},\\
\Ga_{\bl\bl\bl}
=&-\frac{\tilde{\chi}_2}{2}+t\tilde{\chi}_2',\\
\Ga_{\bl A\bl}
=&\frac{1}{2}x_a\chi_{2A}^\prime,\\
\Ga_{\bl BA}=\Ga_{AB \bl}=&-\de_{AB}\chi_{1A}',\\
\Ga_{A\bl B}
=&\de_{AB}\left(-\frac{1}{4}\chi_{2A}+\chi_{1A}'\right).
\end{align*}
whereas $\Gamma_{LLL}=\Gamma_{L\bl L}=\Gamma_{LL\bl}=0$ and
$$
\Gamma_{A\bl\bl}=\Gamma_{A L L}=\Gamma_{L A L}=\Gamma_{A L \bl}=\Gamma_{L A \bl}=\Gamma_{A L B}=\Gamma_{ABL}=\Gamma_{ABC}=0
$$
With the convention (\ref{eq:Chr}), we have the following formula for the curvature (see also \cite[3.10]{LR}):
\[
R_{\mu\al\nu\be}=\pa_\be \Ga_{\mu\al\nu}-\pa_\nu \Ga_{\mu\al\be}+\Ga_{\nu\lm\al}\G{\mu}{\lm}{\be}-\Ga_{\al\lm\be}\G{\mu}{\lm}{\nu}.
\]
We will split the curvatures into two non-tensors, which represent the linear and the quadratic parts
\begin{align*}
R_{\mu\al\nu\be}&=\rl_{\mu\al\nu\be}+\rq_{\mu\al\nu\be},\\
\rl_{\mu\al\nu\be}&=\pa_\be \Ga_{\mu\al\nu}-\pa_\nu \Ga_{\mu\al\be},\\
\rq_{\mu\al\nu\be}&=g^{\lambda\gamma}\Ga_{\nu\lm\al}\Ga_{\mu\gamma\be}
-g^{\lambda\gamma}\Ga_{\al\lm\be}\Ga_{\mu\gamma\nu}.
\end{align*}
%The following components are zero
%\begin{align*}
%\rl_{\mu L \nu\be}&=\rq_{\mu L \nu\be}=0\\
%\rl_{A\mu BC}&=\rq_{A\mu BC}=0.
%\end{align*}
%This leaves us with one component, up to symmetries,  of the curvature, which are:
We claim that the only nontrivial components of the non-tensors $\rl,\rq$ are
$\rl_{A\bl B \bl}$ and $\rq_{A\bl B\bl}$. In fact this follows from the symmetries if we can show that they vanish if at least one index is $L$ or at least three index are $A,B,C$.
For $\rl_{A\bl B \bl}$ the first follows since the only components of $\pa_\beta \Gamma_{\mu\alpha\nu}$ with at least one $L$ are $\pa_{\bl} \Gamma_{\bl L\bl}$, $\pa_\bl \Gamma_{\bl \bl L}=\pa_\bl \Gamma_{L \bl \bl}$ and $\pa_L \Gamma_{\bl \bl\bl}$ but they are seen to cancell each other when appearing in $\rl$. Secondly $\Gamma_{ABC}=0$ and
$\pa_A\Gamma_{BC\bl}=\pa_A \Gamma_{\bl CB}=0$ and $\pa_A\Gamma_{B \bl C}=0$ which concludes the proof of the statement for $\rl$. For $\rq$ the first follows since the only combination of $g^{\lambda\gamma} \Gamma_{\nu\lambda\alpha} \Gamma_{\mu\gamma\beta} $ with one
index $L$ say $\nu=L$, is $g^{\bl L} \Gamma_{L\bl\bl} \Gamma_{ \bl L \bl}$ which
will cancell when appearing in $\rq$. Secondly if three of the indeces of
$g^{\lambda\gamma} \Gamma_{\nu\lambda\alpha} \Gamma_{\mu\gamma\beta} $ are $A,B,C$ say
$\nu=A$ and $\alpha=B$ and $\beta=C$ then in fact $g^{\bl L} \Gamma_{A\bl B} \Gamma_{\mu L C}=0$ which concludes the proof of the claim.

We have
\begin{align*}
\rl_{A\bl B \bl}&=\de_{AB} \left(\frac{1}{2}\chi'_{2A}-2\chi''_{1A}\right),\\
\rq_{A\bl B\bl}&=\de_{AB}\left[\chi_{1A}^{-1} (\chi_{1A}')^2+\frac{1}{4}\tilde{\chi}_2( -\frac{1}{4}\chi_{2A}+\chi'_{1A})\right]
\end{align*}
The last follows since
\begin{multline}
\rq_{A\bl B\bl}=\Gamma_{B C\bl} \Gamma_{A\,\,\,\bl}^{\,\,\, C}-\Gamma_{\bl L \bl}
\Gamma_{A\,\,\,B}^{\,\,\, L}-\Gamma_{\bl \bl \bl}\Gamma_{A\,\,\,B}^{\,\,\, \bl}\\
=g^{CD} \Gamma_{B C\bl} \Gamma_{A D\bl}-g^{L\bl}\Gamma_{\bl L \bl}
\Gamma_{A\bl B}
\end{multline}
With this we compute \footnote{We assume the summation convention on $A$,$B$.}:
\begin{align*}
g^{AB}\rl_{A\bl B \bl}&=\tilde{\chi}_2^\prime-4 \frac{(\chi_1')^2}{(1+\chi_1)^{2 }},\\
g^{AB}\rq_{A\bl B \bl}&=(\chi_1')^2\left[ \frac{2}{(1+\chi_1)^2}\right]-\frac{1}{16}\tilde{\chi}_2^2.
\end{align*}
%\ee
We use this to compute the only non-zero component of the Ricci curvature - $\text{Ric}_{\bl\bl}$:
\begin{align*}
\text{Ric}_{\bl\bl}&=g^{AB}\rl_{A\bl B \bl}+g^{AB}\rq_{A\bl B \bl}\\
&=\tilde{\chi}^\prime_2-2 \frac{(\chi_1')^2}{(1+\chi_1)^{2 }}-\frac{1}{16}\tilde{\chi}_2^2.
\end{align*}
\end{proof}
\begin{proof}[Proof of Lemma \ref{lm:wave}] Since $\gv$ is constant by item 1 of Claim \ref{cl:linalg}, we use some elementary linear algebra to rewrite the wave coordinate condition (\ref{eq:WaveCordinate}) as
\bb
\label{eq:wave2}
g^{\mu\nu}\pa_\mu g_{\nu\ga}=0, \quad \ga=0,..,3.
\ee
Denote
\[
d_\ga=g^{\mu\nu}\pa_\mu g_{\mu\ga}.
\]
Our goal is to show $d_\ga=0$ for $\ga=0,..3$. Instead, we will show $d_L=d_\bl=d_A=0$ for $A=2,3$, which is equivalent since $L,\Bl, A$ form a basis of constant coefficient vector fields.
%Since the $L,\Bl, A$ are constant coefficient vector fields, we will again use them as a basis ins
%We will use $L,\bL,A$ vector fields since they are a basis for constant coefficient vector fields. We need to verify (\ref{eq:wave2}) for $\ga=L,\Bl,A$. This is trivial for $\ga=L$ since all the coefficients of type $g_{UL}$ are constant. For
The fact that $d_L=0$ is obvious, since the metric coefficients of the form $g_{XL}$ are constant. For $d_\bl$, we write
\begin{align*}
d_\bl=&g^{LL}\pa_L g_{L\bl}+g^{L\bl}\pa_L g_{\bl\bl}+g^{LA}\pa_{L}g_{A\bl}&\\
&+g^{\bl L}\pa_{\bl}g_{L\bl}+g^{AL}\pa_A g_{L\bL}+g^{AB}\pa_A g_{B\bl}
\end{align*}
Since coefficients $g_{XL}$ are constant, we drop their derivatives
\[
d_\bl=g^{L\bl}\pa_L g_{\bl\bl}+g^{LA}\pa_{L}g_{A\bl}+g^{AB}\pa_A g_{B\bl}.\\
\]
Observe that $\pa_L g_{A\bl}=\frac{1}{4}\pa_L (x_A \chi_{2A}(x_1-t))=0.$ Therefore
\begin{align*}
d_\bl&=g^{L\bl}\pa_L g_{\bl\bl}+g^{AB}\pa_A g_{B\bl}\\
&=g^{L\bl}\pa_L g_{\bl\bl}+g^{22}\pa_2 g_{2\bl}+g^{33}\pa_3 g_{3\bl}\\
&=-\frac{1}{2}\pa_L(-t\tilde{\chi}_2)-\frac{1}{4}(1+\chi_1)^{-1}\pa_2(x_2(1+\chi_1)\tilde{\chi}_2)\\
&\quad - \frac{1}{4}(1+\chi_1)\pa_3(x_3(1+\chi_1)^{-1}\tilde{\chi}_2)\\
&=0,
\end{align*}
since $\pa_L (t\tilde{\chi}_2(x_1-t))=\tilde{\chi}_2(x_1-t)$. Lastly
\begin{align*}
d_A=&g^{LL}\pa_L g_{LA}+g^{L\bl}\pa_L g_{\bl A}+g^{LB}\pa_{L}g_{AB}\\
&+g^{\bl L}\pa_{\bl}g_{LA}+g^{BL}\pa_B g_{LA}+g^{BC}\pa_B g_{CA}.
\end{align*}
We drop derivatives of the constant coefficients $g_{XL}$
\[
d_A=g^{L\bl}\pa_L g_{\bl A}+g^{LB}\pa_{L}g_{AB}+g^{BC}\pa_B g_{CA}.
\]
Next, observe that $g_{\bl A}$ depends only on $x_1-t$ and $x_A$, thus $\pa_L g_{\bl A}=0$. Similarly $g_{AB}$ depends only on $x_1-t$, therefore $\pa_L g_{AB}=0$. Similarly, $g_{CA}$ depends only on $t-x_1$ and therefore $\pa_B g_{CA}=0$. Thus we arrive at the conclusion
\[
d_A=0,
\]
which completes the proof.
\end{proof}
%\begin{rema}
%\begin{itemize}
%\item Our metric $g$ is mostly one dimensional and depends on $t-x_1$ with some deviations needed to account for the wave coordinate condition.
%\item
%\end{itemize}
%\end{rema}
\begin{proof}[Proof of Lemma \ref{lm:norms}] The function $\chi_1$ has been analyzed in Lemma \ref{lm:chi1}. Thus, to prove the lemma, it is enough to establish the following:
\begin{align*}
\tilde{\chi}_2&\in H^1(D_t)\setminus H^2(D_t),\quad t\in[0,1]\\
x^A\tilde{\chi}_2&\in H^2(D_0),\quad A=2,3.
\end{align*}
Recall that $\tilde{\chi}_2$ satisfies
\bb
\label{eq:chi2ode}
\diff{}{x} \tilde{\chi}_2=2(\chi_1'(x))^2(1+\chi_1(x))^{-2}+\frac{1}{16}\tilde{\chi}_2(x)^2.
\ee
We choose $\tilde{\chi}_2(0)=0$ then by integrating (\ref{eq:chi2ode}), we can show that $\tilde{\chi}_2$ is bounded by 2 for $|y|\leq 1$ if we adjust $\epsilon$ in the definition of $\chi_1$, so that $\int_0^1 (\chi'_1)^2\leq 1$ and apply the bootstrap assumption $\tilde{\chi}_2\leq 4$ for the integral $\int_0^1\tilde{\chi}_2^2$. The same argument works to show that $\tilde{\chi}_2'\in L^2(D_0)$. To show that $\tilde{\chi}_2''\notin L^2$, we differentiate (\ref{eq:chi2ode}). We will have
\bb
\label{eq:chi2odediff}
\tilde{\chi}_2''=2(\chi_1'(y))^2(1+\chi_1^2)^{-2}+F(\chi_1,\chi_1',\tilde{\chi}_2,\tilde{\chi}_2'),
\ee
where $F$ will have a smooth dependance on $\chi_1,\tilde{\chi}_2$ and polynomial in $\chi_1',\tilde{\chi}_2'$. Since $\chi_1',\tilde{\chi}_2'\in L^p$ for any $p<\infty$, we conclude that
\[
\|F(\chi_1,\chi_1',\tilde{\chi}_2,\tilde{\chi}_2')\|_{L^2}\leq C<\infty
\]
Also since $\chi_1$ is bounded, we can bound $(1+\chi_1(y))^{-2}\geq c>0$. Therefore, applying the same logic as in Lemma \ref{lm:chi1}, we will arrive at
\begin{equation*}
\int_{D_t} \tilde{\chi}_2''(x_1-t)^2\geq  c\int_0^{1-t}{\frac{\epsilon^4(1-t)\pi(2\alpha)^2\, x_1 dx_1}{  x_1^2  |\log{|x_1/4|}|^{2(1-2\alpha)}  } }-\frac{1}{2}C^2\\
=\infty.
\end{equation*}
Thus it remains to show that $x_A \tilde{\chi}_2\in H^2(B_0)$. Without loss of generality, put $A=2$. The only estimate which is not addressed above is $x_2\tilde{\chi}''_2\in L^2$, since we have already shown $\der{}{x_2}(x_2 \tilde{\chi}_2)\in H^1$. We use (\ref{eq:chi2odediff}) to obtain the following estimate
\begin{multline*}
\int_{D_t}{x_2^2|\tilde{\chi}_2^{\prime\prime}(x_1-t)|^2\,dx}\\
\ls\int_t^{2-t}{|\tilde{\chi}_2^{\prime\prime}(x_1-t)|^2
    \bigg(\int_{x_2^2+x_3^2\leq (2-(x_1+t))(x_1-t)}{x_2^2\, dx_2 dx_3}\bigg)\,dx_1} \\
\ls\int_t^{2-t}{|\tilde{\chi}_2^{\prime\prime}(x_1-t)|^2 \pi (2-(x_1+t))^2(x_1-t)^2\,dx_1}
\\
\ls
2\int_0^{1-t}{\frac{\epsilon^4(1-t)^2\pi(2\alpha)^2\, dx_1}{   |\log{|x_1/4|}|^{2(1-2\alpha)}  } }
<\infty,
\end{multline*}
which concludes the proof of the Lemma.
%Integrating the differential equation above
\end{proof}

\section{Taking into account the bending of the light cones}
To take into account the bending of the light cones in the metric we need
to open up our domain slight to ensure its space like or null. We will therefore replace our domain.
 Let $D$ be the domain
\begin{equation}
 D=\{(t,x);  (x_1-1)^2 H (x_1-1)+x_2^2/4+x_3^2/4< (1-t)^2\}
 \end{equation}
 where
 $H(x_1-1)=1$, when $x_1<1$ and $H(x_1-1)=1/4$, when $x_1>1$.
 The boundary consist of two parts $C=C_1\cup C_2$, where
\begin{equation}
 C_1=\{(t,x); \, x_1<1,\,\,\, (x_1-1)^2+x_2^2/4+x_3^2/4=(1-t)^2\},
\end{equation}
and
\begin{equation}
 C_2=\{(t,x); \, x_1\geq 1,\,\,\, (x_1-1)^2/4+x_2^2/4+x_3^2/4=(1-t)^2\}.
\end{equation}

A conormal to $C_1$ is given by
$$
n=2(1-t)dt -2(1-x_1)dx_1 +x_2 dx_2/2+x_3 dx_3/2,
$$
or expressed in the $L,\bL,A,B$ coordinates $u=(t-x_1)/2$ and
$v=(t+x_1)/2$
$$
n=4(1-v)du-4u dv +x_2 dx_2/2+x_3 dx_3/2.
$$
Hence
\begin{multline*}
g_{\alpha\beta}N^\alpha N^\beta
=g^{\alpha\beta} n_\alpha n_\beta \\=g^{uu} n_u n_u +g^{vv} n_v n_v +2g^{uv} n_u n_v +2g^{uA} n_u n_A+g^{AB} n_A n_B\\
=16(1-v)u+\Big(\frac{1}{4}(u+v)+\frac{1}{64}\Big( x_2^2(1+\chi_1)+x_3^2(1+\chi_1)^{-1}\Big)\tilde{\chi}_2\Big)\tilde{\chi}_2 16 u^2\\
-(x_2^2+x_3^2)(1-v)\tilde{\chi}_2 +(1+\chi_1)^{-1} x_2^2 \frac{1}{4} +(1+\chi_1) x_3^2\frac{1}{4}\\
=16(1-v)u+\frac{1}{4}(x_2^2+x_3^2)+\frac{1}{4}  \big(x_3^2-(1+\chi_1)^{-1} x_2^2\big)\chi_1-(x_2^2+x_3^2)(1-v)\tilde{\chi}_2\\
+\Big(\frac{1}{4}(u+v)+\frac{1}{64}\Big( x_2^2(1+\chi_1)+x_3^2(1+\chi_1)^{-1}\Big)\tilde{\chi}_2\Big)\tilde{\chi}_2 16 u^2.
\end{multline*}
The surface is in the $uv$ coordinates given by
$$
4(1-v)u +x_2^2/4+x_3^2/4=0.
$$
Therefore it is clear that if $N$ is the normal to $C_1$ then
$$
g_{\alpha\beta}N^\alpha N^\beta\leq 0,
$$ with equality only if $u=0$. This proves that $C_1$ is space like apart from when $u=0$ where its null.

\section*{Acknowledgments}
H.L \ is partially supported by NSF grant DMS--1237212
%\G{L}{B}{C}
%\end{align*}
%\bibliography{EinsCount}
%\bibliographystyle{amsplain}
\providecommand{\bysame}{\leavevmode\hbox to3em{\hrulefill}\thinspace}
\providecommand{\MR}{\relax\ifhmode\unskip\space\fi MR }
% \MRhref is called by the amsart/book/proc definition of \MR.
\providecommand{\MRhref}[2]{%
  \href{http://www.ams.org/mathscinet-getitem?mr=#1}{#2}
}
\providecommand{\href}[2]{#2}

\end{document}